\documentclass[12pt,a4paper,reqno]{amsart}

\usepackage{amsmath,mathrsfs}
\usepackage{amssymb}
\usepackage{color}
\usepackage{dsfont}
\usepackage[latin1]{inputenc}

\newcommand\NoBlackBoxes{\global\overfullrule0pt}
\NoBlackBoxes

\usepackage{todonotes}
\allowdisplaybreaks

\makeatletter
\let\serieslogo@\relax
\let\@setcopyright\relax

\newtheorem{definition}{Definition}[section]
\newtheorem{theorem}[definition]{Theorem}

\newtheorem{rem}[definition]{Remark}

\newtheorem{corollary}[definition]{Corollary}

\newcommand{\R}{{\mathbb{R}}}

\renewcommand{\epsilon}{\varepsilon}
\renewcommand{\phi}{\varphi}

\newcommand{\be}{\begin{equation}}
\newcommand{\ee}{\end{equation}}
\begin{document}

\title[Associative memories for sparse patterns with huge capacity]{On associative neural networks for sparse patterns with huge capacities}

\author[Matthias L\"owe]{Matthias L\"owe}
\address[Matthias L\"owe]{Fachbereich Mathematik und Informatik,
University of M\"unster,
Einsteinstra\ss e 62,
48149 M\"unster,
Germany}

\email[Matthias L\"owe]{maloewe@math.uni-muenster.de}

\author[Franck Vermet]{Franck Vermet}
\address[Franck Vermet]{Univ Brest, CNRS UMR 6205, Laboratoire de Math\'ematiques de Bretagne Atlantique,  6, avenue Victor Le Gorgeu\\
CS 93837\\
F-29238 BREST Cedex 3\\
France}

\email[Franck Vermet]{Franck.Vermet@univ-brest.fr}


\date{\today}

\subjclass[2000]{Primary: 82C32, 60K35, Secondary: 68T05, 92B20}

\keywords{Neural networks, associative memory, exponential inequalities}

\begin{abstract}
	Generalized Hopfield models with higher-order or exponential interaction terms
	are known to have substantially larger storage capacities than the classical
	quadratic model. On the other hand, associative memories for sparse patterns,
	such as the Willshaw and Amari models, already outperform the classical Hopfield
	model in the sparse regime.
	
	In this paper we combine these two mechanisms. We introduce higher-order versions
	of sparse associative memory models and study their storage capacities. For fixed
	interaction order $n$, we obtain storage capacities of polynomial order in the
	system size. When the interaction order is allowed to grow logarithmically with
	the number of neurons, this yields super-polynomial capacities. We also discuss
	an analogue in the Gripon--Berrou architecture which was formulated for non-sparse messages (see \cite{griponc}).
	
	Our results show that the capacity increase caused by higher-order interactions
	persists in the sparse setting, although the precise storage scale depends on the
	underlying architecture.
\end{abstract}

\maketitle



\section{Introduction}

Since the seminal paper of Hopfield \cite{Hopfield1982}, neural networks with
Hebbian interactions have been studied as models of associative memory. In the
classical Hopfield model, one stores $M$ random patterns
\[
\xi^1,\dots,\xi^M\in\{-1,+1\}^N
\]
by means of quadratic interactions, and one asks whether the resulting dynamics
is able to recover these patterns from the induced energy landscape. Depending on
the notion of retrieval, the model stores at most linearly many patterns in the
number $N$ of neurons; see, for instance,
\cite{MPRV, burshtein, Newman_hopfield, loukianova, Talagrand_hopfield}.
Extensions to correlated patterns and diluted architectures were studied in
\cite{Lo98, LV05, LV11, LV14}.

When the stored patterns are sparse, i.e.\ when the active state occurs only
rarely, the classical Hopfield model is no longer the most natural associative
memory model. In that regime, several alternatives have been proposed, including
the models of Willshaw \cite{Willshaw}, Amari \cite{Amari1989}, and related
systems \cite{Okada1996, LV_BEG, griponc}. Their performance was compared
systematically in \cite{GHLV16}. A general conclusion is that sparsity alone can
lead to a significant increase in storage capacity: depending on the model and on
the retrieval criterion, one obtains storage capacities ranging from order
$\alpha N$
up to order
$
c\frac{N^2}{(\log N)^2}.
$

A different mechanism for increasing the capacity of associative memories is to
replace the quadratic interaction term by higher-order interactions. Variants of
this type were considered already in \cite{Newman_hopfield} and were recently
revisited by Krotov and Hopfield \cite{KrotovHopfield2016}. For interaction
polynomials of degree $n$, the storage capacity increases to polynomial order in
$N$, more precisely to scales of order $N^{n-1}$; rigorous results in this
direction were obtained in \cite{Newman_hopfield, DHLUV17}. Moreover, in
\cite{DHLUV17} it was shown that, in a suitable large-order limit corresponding
to exponential interactions, one obtains exponentially large storage capacities. A continuous version of this model with exponential storage capacity was proposed in \cite{ramsauer2020hopfield}. Interestingly, they showed the equivalence of this model with the attention mechanisms used in transformers and language models. Recent work has also established a link between associative memory models and the diffusion models used in generative AI (\cite{ambrogioni2024search},\cite{pham2025memorization}). Many questions remain regarding the notion of memorization and information generation, which underscores the continued interest in studying associative memory models.

The aim of the present paper is to  study associative
memory models for sparse binary patterns with higher-order interaction terms,
thereby linking the sparse models investigated in \cite{GHLV16} with the huge
capacity mechanism of \cite{DHLUV17}. In particular, we introduce higher-order
versions of the Amari and Willshaw models and investigate the resulting storage
capacities. For fixed interaction order, we obtain polynomial storage capacities
in the sparse setting. If the interaction order grows logarithmically with the
system size, the capacity becomes super-polynomial. We also discuss a related
construction in the Gripon--Berrou architecture, where the block structure leads
to a different natural storage scale.

Our results are fixed-point stability results: with high probability, stored
patterns are stable under the corresponding one-step dynamics. We do not address
here the size of basins of attraction or the correction of partially corrupted
patterns. Nevertheless, the results show that the capacity increase due to
higher-order interactions is robust and persists far beyond the classical dense
Hopfield setting.

The remainder of the paper is organized as follows. In the next section we define
the higher-order sparse associative memory models studied here. The main results
on storage capacities are stated in Section~3, where we also discuss the effect
of growing interaction order and the corresponding Gripon--Berrou variant. The
proofs are given in the final section.

\section{The models}

Throughout this paper we consider sparse $0$-$1$ patterns
\[
(\xi^\mu)_{\mu=1,\ldots,M},
\]
where the random variables $\xi_i^\mu$ are i.i.d.\ and take values in $\{0,1\}$.
We assume that
\[
\mathbb P(\xi_i^\mu=1)=p=p_N
\]
is small. In fact, we shall always choose
\[
p_N=\frac{\log N}{N},
\]
which corresponds to an extremely sparse regime. For even sparser patterns, there
is a positive probability that some of the messages $\xi^\mu$ are identically zero (and then, of course, they cannot be distinguished anymore).
We shall consider various associative memory models for such sparse patterns.
They will be described through their local fields and retrieval dynamics.

\subsection{Amari's models}

The first class of models we consider is inspired by work of Amari
\cite{Amari1989}. It is the sparse analogue of a generalized Hopfield model with
higher-order interactions.

For a configuration $\sigma=(\sigma_i)_{i=1}^N\in\{0,1\}^N$, define the retrieval
dynamics $T=(T_i)_{i=1}^N$ by
\begin{equation}\label{multdyn}
	T_i(\sigma):=
	\Theta\left(
	\sum_{1\le j_1,\ldots,j_{n-1}\le N}^{*}
	\sigma_{j_1}\cdots \sigma_{j_{n-1}}
	W_{i,j_1,\ldots,j_{n-1}}
	-h
	\right),
\end{equation}
where the restricted sum $\sum^*$ runs over pairwise distinct indices and
\begin{equation}\label{multhebb}
	W_{i_1,\ldots,i_n}
	:=
	\sum_{\mu=1}^M
	\xi_{i_1}^\mu \xi_{i_2}^\mu \cdots \xi_{i_n}^\mu.
\end{equation}
Here $\Theta(x)=\mathbf 1_{\{x>0\}}$ denotes the Heaviside function. We choose
\[
h=\gamma \log^{n-1} N
\]
for some $\gamma>0$. A threshold is necessary here, since otherwise the local
fields would typically force all neurons to become active.

Next we consider a logarithmically growing interaction order. For fixed $n$, the
storage capacity remains of polynomial order in $N$. To obtain a genuinely larger
capacity while staying within the sparse regime, it is natural to let the
interaction order grow with the system size. Since the typical number of active
neurons in a pattern is of order $\log N$, the choice
\[
n=\kappa \log N
\]
with $\kappa<1$ is the natural sparse analogue of the large-order regime. While
in \cite{DHLUV17} this limit was realized through an exponential interaction
function, here we implement it directly by letting $n$ depend on $N$. This leads to the dynamics $\widehat T=(\widehat T_i)_{i=1}^N$
defined by
\begin{equation}\label{multdyn_inf}
	\widehat T_i(\sigma):=
	\Theta\left(
	\sum_{1\le j_1,\ldots,j_{n-1}\le N}^{*}
	\sigma_{j_1}\cdots \sigma_{j_{n-1}}
	W_{i,j_1,\ldots,j_{n-1}}
	-h
	\right),
\end{equation}
where now $n=\kappa\log N$. For notational simplicity, we shall always tacitly
assume that $\kappa\log N\in\mathbb N$. In this situation we choose
\[
h=\gamma^{n-1} \log^{n-1} N
\]
for some $\gamma>0$.

\subsection{Willshaw models}

The Willshaw models are close relatives of Amari's models and go back to the
celebrated paper by Willshaw \cite{Willshaw}. They differ from Amari's models in
that the synaptic efficacies do not depend on the number of stored messages using
a given set of neurons, but only on whether this set appears in at least one
stored message.

Formally, one may either assume that the variables $\xi_i^\mu$ are i.i.d.\
Bernoulli random variables with success probability
\[
p=\frac{\log N}{N},
\]
or one may choose $M$ messages uniformly at random from the set of all messages
with exactly $c=\log N$ active neurons. The two settings are very similar,
although one or the other is sometimes more convenient mathematically.

The Willshaw-type models have the same dynamics as in
\eqref{multdyn} and \eqref{multdyn_inf}, respectively. The only difference is
that the weights \eqref{multhebb} are replaced by
\begin{equation}\label{willshawhebb}
	W_{i_1,\ldots,i_n}
	:=
	\Theta\left(
	\sum_{\mu=1}^M
	\xi_{i_1}^\mu \xi_{i_2}^\mu \cdots \xi_{i_n}^\mu
	-1
	\right).
\end{equation}

\subsection{GB models}

The next class of models is inspired by the work of Gripon and Berrou
(\cite{griponc}, for example). Here the setting is slightly different. We
consider systems of size
\[
N=l\log l=:lc
\]
for integers $l,c\in\mathbb N$. The neurons are grouped into $l$ blocks of size
$c$.

We now restrict ourselves to a special family of sparse messages
\[
\xi^1,\ldots,\xi^M.
\]
Each message $\xi^\mu$ has exactly one active neuron in each block, and hence
exactly $l$ active neurons in total. More precisely, if $(a,k)$ denotes the
$k$-th neuron in the $a$-th block, then for each $\mu=1,\ldots,M$ and each
$a=1,\ldots,l$ there exists exactly one $k\in\{1,\ldots,c\}$ such that
\[
\xi_{(a,k)}^\mu=1,
\]
while all other coordinates of $\xi^\mu$ are equal to $0$.

The dynamics is analogous to that of the Willshaw model. Define
\begin{equation}\label{eq:GBweights}
	W_{(a_1,k_1),\ldots,(a_n,k_n)}
	:=
	\Theta\left(
	\sum_{\mu=1}^M
	\xi_{(a_1,k_1)}^\mu \cdots \xi_{(a_n,k_n)}^\mu -1
	\right),
\end{equation}
whenever $a_1,\ldots,a_n$ are pairwise distinct, and set
\[
W_{(a_1,k_1),\ldots,(a_n,k_n)}:=0
\]
otherwise.

For $\sigma\in\{0,1\}^{lc}$, define the local field
\begin{equation}\label{eq:GB_dynamics}
	S_{(a,k)}(\sigma)
	:=
	\sum_{\substack{a_2,\ldots,a_n=1\\ a,a_2,\ldots,a_n\ \text{pairwise distinct}}}^{l}
	\ \sum_{k_2,\ldots,k_n=1}^{c}
	W_{(a,k),(a_2,k_2),\ldots,(a_n,k_n)}
	\,\sigma_{(a_2,k_2)}\cdots \sigma_{(a_n,k_n)} .
\end{equation}
Finally, define the dynamics by
\begin{equation}\label{eq:GB_dynamics2}
	T_{(a,k)}(\sigma)
	=
	\Theta\bigl(S_{(a,k)}(\sigma)-h\bigr),
\end{equation}
for a suitable threshold $h$.

As before, one may also allow the interaction order to grow with the system size
by taking
\[
n=\kappa\log N
\]
for some $\kappa<1$.

\section{Storage capacities}

We now turn to the storage capacities of the models introduced in the previous
section. We begin with the higher-order Amari and Willshaw models for fixed
interaction order, and then consider the logarithmically growing regime. Finally,
we discuss the corresponding results in the Gripon--Berrou architecture.

\begin{theorem}\label{theo:amari_fixed_n}
	Consider Amari's model with fixed interaction order $n$, defined by
	\eqref{multdyn} and \eqref{multhebb}, and choose
	\[
	h=\gamma \log^{n-1} N
	\]
	for some $0<\gamma<1$. Then there exists $\alpha_0=\alpha_0(n,\gamma)>0$ such that
	for every $\alpha<\alpha_0$, the choice
	\[
	M=\alpha \frac{N^n}{(\log N)^n}
	\]
	satisfies
	\[
	\mathbb P\bigl(T(\xi^\mu)=\xi^\mu\bigr)\longrightarrow1
	\qquad\text{as }N\to\infty
	\]
	for every fixed stored pattern $\xi^\mu$.
\end{theorem}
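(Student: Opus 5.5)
Fix $\mu=1$ by exchangeability and condition on the support $A=\{i:\xi^1_i=1\}$, with $|A|=K\sim\mathrm{Bin}(N,p)$. The plan has three steps: control $K$, show the active sites stay on, and show the inactive sites stay off.

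\emph{Step 0: control of $K$.} By Chernoff bounds, with probability tending to one we have $|K-\log N|\le \epsilon\log N$ for any fixed $\epsilon>0$. Since $K$ has mean $\log N$ and variance at most $\log N$, I will actually use $K=\log N+O(\sqrt{\log N}\,\omega_N)$ for a slowly growing $\omega_N$. This gives $(K-1)_{n-1}=K(K-1)\cdots$ of order $\log^{n-1}N\,(1+o(1))$.

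For $\sigma=\xi^1$ the local field at site $i$ is
\[
S_i=\sum_{\nu}\xi^\nu_i\,(|A\cap \mathrm{supp}\,\xi^\nu\setminus\{i\}|)_{n-1},
\]
where $(x)_{m}$ is the falling factorial. The contribution of $\nu=1$ is $\xi^1_i (K-1)_{n-1}$.

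\emph{Step 1: sites $i\in A$.} The signal is $(K-1)_{n-1}\ge (1-\epsilon)^{n-1}\log^{n-1}N>\gamma\log^{n-1}N$, once $\epsilon$ is chosen with $(1-\epsilon)^{n-1}>\gamma$; this is where $\gamma<1$ enters. All other terms are nonnegative, so $T_i(\xi^1)=1$ holds for every $i\in A$ simultaneously on the event of Step 0, with no union bound needed.

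\emph{Step 2: sites $i\notin A$.} Here $S_i=\sum_{\nu\ge2}\xi^\nu_i\,(Y_\nu)_{n-1}$, where, conditionally on $A$, the variables $Y_\nu=|A\cap\mathrm{supp}\,\xi^\nu|\sim \mathrm{Bin}(K,p)$ are i.i.d.\ over $\nu$ and independent of the $\xi^\nu_i$. A term is nonzero only if $\xi^\nu_i=1$ and $Y_\nu\ge n-1$, which has probability $q\le p\binom{K}{n-1}p^{n-1}\approx p^n\log^{n-1}N/(n-1)!$. The number $Z$ of such $\nu$ is therefore $\mathrm{Bin}(M,q)$ with mean
\[
\lambda=Mq\approx \alpha\,\frac{N^n}{\log^nN}\cdot\frac{\log^{2n-1}N}{N^n\,(n-1)!}=\frac{\alpha\log^{n-1}N}{(n-1)!}.
\]
Each nonzero term equals $(Y_\nu)_{n-1}$, and given $Y_\nu\ge n-1$ it is typically exactly $(n-1)!$, because $Y_\nu\ge n$ costs an extra factor $Kp=\log^2 N/N$. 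Hence $S_i\approx (n-1)!\,Z$ with $\mathbb E S_i\approx\alpha\log^{n-1}N$. We need $\mathbb P(S_i\ge\gamma\log^{n-1}N)=o(1/N)$ so that a union bound over $N$ sites works. Chernoff for $\mathrm{Bin}(M,q)$ gives, for the threshold $t=\gamma\log^{n-1}N/(n-1)!$,
\[
\mathbb P(Z\ge t)\le \exp\bigl(-t\log(t/(e\lambda))\bigr)=\exp\bigl(-\tfrac{\gamma}{(n-1)!}\log^{n-1}N\,\log\tfrac{\gamma}{e\alpha}\bigr).
\]
For $n\ge3$ this is $N^{-\omega(1)}$ as soon as $\alpha<\gamma/e$. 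For $n=2$ the exponent is $\gamma\log(\gamma/(e\alpha))\log N$, which exceeds $(1+\delta)\log N$ once $\alpha$ is small enough; this fixes $\alpha_0(n,\gamma)$. The rare $\nu$ with $Y_\nu\ge n$ must also be handled: their expected number is $M p\binom{K}{n}p^n=O(\log^{n}N\cdot\log^2 N/N)\to0$, even summed over $i$ after multiplying by $N$ it is $O(\log^{n+2}N\cdot N^{0})$... so I will instead bound the probability that any $\nu$ has $Y_\nu\ge n$ and $\xi^\nu_i=1$ for some $i\notin A$. This is at most $M\binom{K}{n}p^n\cdot Np\approx \alpha\log N\cdot\log^{n}N/(n!\,\log^nN)\cdot$const, which is not small. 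So a more careful approach is needed: such $\nu$ contribute at most $(K)_{n-1}$ but occur with conditional probability per site $\approx \alpha\log^{n}N\,p/n!=O(\log^{n+1}N/N)$ per $\nu$ count. I will bound the exponential moment $\mathbb E e^{sS_i}=(1+p\,\mathbb E(e^{s(Y)_{n-1}}-1))^{M}$ directly with $s=\theta/(n-1)!$, using that $\mathbb P(Y=m)\le (Kp)^m/m!$ decays fast enough that higher $m$ change $\log\mathbb E e^{sS_i}$ only by a $(1+o(1))$ factor.

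The main obstacle is Step 2: obtaining a tail bound at level $o(1/N)$ for the noise, uniformly over $K$ on the good event. This requires handling the heavy-tailed contributions from patterns overlapping $A$ in more than $n-1$ sites; I will do this via the exponential-moment computation above, optimizing over $\theta$. The dependence among sites is irrelevant because only a union bound is used.
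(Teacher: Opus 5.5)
Steps 0 and 1 are fine and coincide with the paper. Your representation $S_i=\sum_{\nu\ge2}\xi^\nu_i (Y_\nu)_{n-1}$ at an inactive site is also correct; it is what the paper uses for Theorem~\ref{theo:amari_large_n}. For fixed $n$ the paper instead bounds the per-pattern moment generating function via Hoeffding's block decomposition of a $U$-statistic, at the small parameter $t=a/d^{n-2}$.

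The gap is in your final plan for Step 2. With $s=\theta/(n-1)!$ of order one, you claim the terms $m\ge n$ change $\log\mathbb E e^{sS_i}$ only by a factor $1+o(1)$. This is false for $n\ge4$. Since $Y\sim\mathrm{Bin}(K,p)$ equals $K$ with probability $p^K$,
\[
\mathbb E\,e^{s(Y)_{n-1}}\ \ge\ p^K e^{\theta\binom{K}{n-1}}
=\exp\Bigl(\theta\tbinom{K}{n-1}-K\log\tfrac{N}{\log N}\Bigr),
\]
and this exponent is of order $+\log^{n-1}N$, because $\binom{K}{n-1}\asymp\log^{n-1}N\gg\log^2N\asymp K\log(N/\log N)$. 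Hence $\log\mathbb E e^{sS_i}$ is at least of order $M\log^{n-1}N$, far above $sh$, and the Chernoff bound is vacuous. In general $(Kp)^m/m!\approx e^{-m\log N}$ cannot compensate $e^{\theta m^{n-1}/(n-1)!}$ once $m^{n-2}\gg\log N$. For $n=3$ the estimate survives only for roughly $\theta<2$, so freely optimizing over $\theta$ breaks there too. Only $n=2$ is unaffected.

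What is missing is a truncation of large overlaps, and it is cheap because $Y_\nu$ does not depend on $i$. The single event $G=\{Y_\nu\le n \text{ for all } \nu\ge2\}$ satisfies $\mathbb P(G^c\mid A)\le M\binom{K}{n+1}p^{n+1}=O(\log^{n+2}N/N)\to0$, so it needs no union bound over sites. Your attempted exclusion failed because you tried to rule out $Y_\nu\ge n$, which genuinely occurs about $\alpha\log^nN/n!$ times.

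On $G$, every nonzero summand is at most $n!$, so $S_i\le n!\,Z$ with your $Z\sim\mathrm{Bin}(M,q)$ and $Mq\le\alpha\binom{K}{n-1}$. Your own Chernoff estimate at level $h/n!$ then gives
\[
\mathbb P(S_i\ge h,\,G\mid A)\le\exp\Bigl(-\tfrac{\gamma}{n!}\log^{n-1}N\,\log\tfrac{\gamma}{en\alpha(1+\epsilon)^{n-1}}\Bigr).
\]
This is $o(N^{-1})$ for $\alpha$ small, and superpolynomially small for $n\ge3$.

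Alternatively, keep the untruncated moment generating function but take $s\asymp(\log N)^{-(n-2)}$, as the paper does. Then $s(m)_{n-1}=O(m)$ for all $m\le K$, the tail terms are harmless, and you get a polynomial bound $N^{-1-\rho}$ after choosing the constant in $s$ large and then $\alpha$ small.

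With the truncation, your route is cleaner than the Hoeffding $U$-statistic argument and yields stronger tails for $n\ge3$.
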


Theorem \ref{theo:amari_fixed_n} has an immediate consequence for the Willshaw model with fixed $n$.

\begin{corollary}\label{cor:willshaw_fixed_n}
	Consider the higher-order Willshaw model with fixed interaction order $n$,
	defined by \eqref{multdyn} and \eqref{willshawhebb}, and choose
	\[
	h=\gamma \log^{n-1} N
	\]
	for some $0<\gamma<1$. Then there exists $\alpha_0=\alpha_0(n,\gamma)>0$ such that
	for every $\alpha<\alpha_0$, the choice
	\[
	M=\alpha \frac{N^n}{(\log N)^n}
	\]
	satisfies
	\[
	\mathbb P\bigl(T(\xi^\mu)=\xi^\mu\bigr)\longrightarrow1
	\qquad\text{as }N\to\infty
	\]
	for every fixed stored pattern $\xi^\mu$.
\end{corollary}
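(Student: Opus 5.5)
We need to prove the corollary for the Willshaw model, using Theorem \ref{theo:amari_fixed_n}. The natural route is a sandwich argument. Since $W^{\mathrm{Will}}=\mathbf 1\{W^{\mathrm{Am}}\ge 1\}$, we have pointwise $W^{\mathrm{Will}}_{i_1,\ldots,i_n}\le W^{\mathrm{Am}}_{i_1,\ldots,i_n}$. Moreover, whenever $i_1,\ldots,i_n$ all lie in the support of the fixed pattern $\xi^\mu$, both weights are at least $1$, so $W^{\mathrm{Will}}=1\le W^{\mathrm{Am}}$ there.

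Fix $\mu$ and write $S_i^{A}$ and $S_i^{W}$ for the local fields at $\sigma=\xi^\mu$ in the two models. The local field only sums over tuples with $\xi^\mu_{j_1}=\dots=\xi^\mu_{j_{n-1}}=1$. We treat the two kinds of sites separately.

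\emph{Inactive sites} ($\xi^\mu_i=0$). Here $0\le S_i^W\le S_i^A$. On the event $\{T(\xi^\mu)=\xi^\mu\}$ for Amari's model we have $S_i^A\le h$, hence $S_i^W\le h$, so $T^{W}_i(\xi^\mu)=0$.

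\emph{Active sites} ($\xi^\mu_i=1$). Every tuple counted has all indices in the support $A_\mu$ of $\xi^\mu$, so each Willshaw weight equals $1$ and
\[
S_i^W=(|A_\mu|-1)(|A_\mu|-2)\cdots(|A_\mu|-n+1).
\]
This is exactly the signal part of Amari's field, without the noise. Since $|A_\mu|\sim\mathrm{Bin}(N,p_N)$ with mean $\log N$, Chernoff's bound gives $|A_\mu|\ge(1-\epsilon)\log N$ with probability tending to one. Hence $S_i^W\ge(1-\epsilon)^{n-1}\log^{n-1}N(1+o(1))>\gamma\log^{n-1}N$, once $\epsilon$ is chosen with $(1-\epsilon)^{n-1}>\gamma$, which is possible because $\gamma<1$.

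Combining the two cases,
\[
\{T^{A}(\xi^\mu)=\xi^\mu\}\cap\{|A_\mu|\ge(1-\epsilon)\log N\}\subseteq\{T^{W}(\xi^\mu)=\xi^\mu\}.
\]
Both events on the left have probability tending to $1$, the first by Theorem \ref{theo:amari_fixed_n} with the same $\alpha_0(n,\gamma)$. This proves the corollary.

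The only delicate point is the active sites. Monotonicity $W^W\le W^A$ goes the wrong way there, so the theorem cannot simply be quoted and the lower bound on the signal must be argued directly. It holds because Willshaw weights inside the pattern's support are exactly $1$, and it needs only the concentration of $|A_\mu|$, with no dependence on $M$.
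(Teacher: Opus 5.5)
Your proof is correct and follows essentially the same route as the paper. Active sites are handled directly, since the pattern itself forces every relevant Willshaw weight to equal $1$, and inactive sites via the pointwise domination $W^{\mathrm{Will}}\le W^{\mathrm{Am}}$. The only cosmetic difference is that you use Theorem~\ref{theo:amari_fixed_n} as a black box through the event inclusion, whereas the paper reuses the inactive-site error bound from inside that theorem's proof.
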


We next turn to the GB model with fixed interaction order $n$.

\begin{theorem}\label{thm:GB_fixed_n}
	Let $n\ge2$ be fixed and assume that
	\[
	M=\alpha c^n
	\]
	for some $\alpha>0$. Let
	\[
	h=\gamma \binom{l-1}{n-1}
	\]
	with $0<\alpha<\gamma<1$.
	Then, for every stored message $\xi^\mu$,
	\[
	\mathbb P\bigl(T(\xi^\mu)=\xi^\mu\bigr)\longrightarrow 1
	\qquad\text{as } l\to\infty .
	\]
\end{theorem}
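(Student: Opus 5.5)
Fix the stored message $\xi^\mu$ and split the neurons according to whether $\xi^\mu$ is active there or not. We must show that with high probability all $l$ active neurons stay active and all $l(c-1)$ inactive neurons stay inactive.

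\emph{Active neurons.} Let $\xi^\mu_{(a,k)}=1$. Every term in \eqref{eq:GB_dynamics} with $\sigma=\xi^\mu$ in which $(a_2,k_2),\dots,(a_n,k_n)$ are the active neurons of $\xi^\mu$ in the blocks $a_2,\dots,a_n$ has weight $W=1$, because $\xi^\mu$ itself contributes to the sum in \eqref{eq:GBweights}. All other terms vanish, since $\sigma_{(a_j,k_j)}=0$ for them. Hence
\[
S_{(a,k)}(\xi^\mu)=(n-1)!\binom{l-1}{n-1},
\]
where the factorial comes from the ordered sum over $a_2,\dots,a_n$. This exceeds $h=\gamma\binom{l-1}{n-1}$ deterministically, because $\gamma<1$. (If the ordered convention is kept, $h$ should be read with the same $(n-1)!$ factor; either way the active neurons are stable with probability one.)

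\emph{Inactive neurons.} Let $\xi^\mu_{(a,k)}=0$, so $k\neq k^\mu_a$, where $k^\mu_a$ is the active index of $\xi^\mu$ in block $a$. Up to the common factor $(n-1)!$, $S_{(a,k)}(\xi^\mu)$ counts the $(n-1)$-subsets $B\subset\{1,\dots,l\}\setminus\{a\}$ such that some $\nu\neq\mu$ has $k^\nu_a=k$ and $k^\nu_b=k^\mu_b$ for all $b\in B$. For fixed $B$, a single $\nu$ does this with probability $c^{-n}$. A union bound over $\nu$ then gives
\[
\mathbb P(W_B=1)\le Mc^{-n}=\alpha .
\]
Therefore $\mathbb E\,S_{(a,k)}/(n-1)!\le\alpha\binom{l-1}{n-1}$, which is below the threshold with margin $(\gamma-\alpha)\binom{l-1}{n-1}$.

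The main obstacle is concentration. We need
\[
\mathbb P\Bigl(S_{(a,k)}>\gamma\tbinom{l-1}{n-1}\Bigr)=o\bigl(1/(lc)\bigr),
\]
uniformly over the $l(c-1)$ inactive neurons, so that a union bound finishes the proof. The indicators $W_B$ are strongly dependent for overlapping sets $B$, so I would not expand the sum in $B$ directly. Instead I would condition on $\xi^\mu$ and treat $X=S_{(a,k)}/\bigl((n-1)!\binom{l-1}{n-1}\bigr)$ as a function of the $M-1$ independent messages $\xi^\nu$, $\nu\ne\mu$.

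Only messages with $k^\nu_a=k$ can contribute. Their number $K$ is Binomial$(M-1,1/c)$, so $K\le 2\alpha c^{n-1}$ with probability $1-e^{-\Omega(c^{n-1})}$. For such a message, let $J_\nu$ be the number of blocks $b\neq a$ with $k^\nu_b=k^\mu_b$. Then $J_\nu$ is Binomial$(l-1,1/c)$, concentrated near $l/c$. Message $\nu$ covers $\binom{J_\nu}{n-1}$ sets $B$, so
\[
X\le\frac{\sum_\nu\binom{J_\nu}{n-1}}{\binom{l-1}{n-1}}\approx\frac{\sum_\nu J_\nu^{\,n-1}}{l^{n-1}} .
\]

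This bound only reproduces the mean estimate $\alpha$, and it loses the tail. So the key step is an exponential (Chernoff) bound for the sum of the i.i.d.\ variables $\binom{J_\nu}{n-1}$. Their mean is $\binom{l-1}{n-1}c^{-(n-1)}$, and since $J_\nu\le l$ they are bounded by $\binom{l-1}{n-1}$. Thus $X$ is a sum of $M-1$ independent variables in $[0,1]$ with total mean at most $\alpha$. Bernstein's inequality then gives deviation probability $\exp(-\Omega((\gamma-\alpha)\cdot\text{effective count}))$. The effective count is large because individual summands are typically of size only $(l/c)^{n-1}/l^{n-1}=c^{-(n-1)}$, and values near $1$ require $J_\nu$ of order $l$. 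The latter has probability at most $\binom{l}{\epsilon l}c^{-\epsilon l}$, which is super-polynomially small since $c=\log l$.

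Combining the truncation of $J_\nu$ at a level such as $\log^2 l$ with Bernstein's inequality on the truncated sum should give a bound of order $\exp(-\Omega(c^{n-1}/\mathrm{polylog}))$. That is the step I expect to be delicate: the concentration is only in terms of $c=\log l$, so this bound is not automatically $o(1/(lc))$ when $n=2$, and in that case the estimate must be checked carefully. After that, a union bound over the inactive neurons completes the proof.
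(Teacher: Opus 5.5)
There is a genuine gap. The one nontrivial step, the tail bound for the field at an inactive neuron, is never carried out, and the truncation you propose is at the wrong scale. Since $J_\nu\sim\mathrm{Bin}(l-1,1/c)$ concentrates at $\lambda=(l-1)/c\approx l/\log l$, a cut at $\log^2 l$ lies far below its typical value. No Bernstein argument with an ``effective count'' or polylog losses is needed. Cut instead at $(1+\varepsilon)\lambda$. This fails for some $\nu$ with probability at most $Me^{-\Omega(\varepsilon^2\lambda)}$, which is negligible, and off that event your own bound gives
\[
X\le(1+\varepsilon)^{n-1}(1+o(1))\,c^{-(n-1)}\,m',\qquad m':=\#\{\nu\neq\mu:\ k^\nu_a=k\}\sim\mathrm{Bin}(M-1,1/c).
\]
So this bound does not ``lose the tail''. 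The count $m'$, of mean about $\alpha c^{n-1}$, is the only source of non-negligible fluctuation. A Chernoff bound for $m'$ at level $\gamma(1+\varepsilon)^{-(n-1)}c^{n-1}$ gives $\mathbb P(S_{(a,k)}>h)\le\exp(-\Omega(c^{n-1}))$ plus a negligible term. This is essentially the paper's argument for Theorem~\ref{thm:GB_logarithmic_n}. For fixed $n\ge3$ it yields $\exp(-\Omega(\log^2 l))=o(1/(lc))$, and the union bound over inactive neurons finishes the proof.

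For $n=2$, which you leave open, the gap cannot be closed. The order $c^{n-1}$ in the exponent is sharp, and the statement is then false for part of the range $0<\alpha<\gamma<1$. Condition on $\xi^\mu$ and on all block-$a$ coordinates; then $S_{(a,k)}(\xi^\mu)\sim\mathrm{Bin}\bigl(l-1,1-(1-1/c)^{m'}\bigr)$. So $(a,k)$ fires with conditional probability $1-o(1)$ once $m'\ge xc$, where $x=(1+\varepsilon)\log\frac{1}{1-\gamma}$. Since $c\sim\log l$, this event has probability $l^{-r+o(1)}$ with $r=x\log(x/\alpha)-x+\alpha$, and $r<0.1$ for $\gamma=\tfrac12$, $\alpha=0.45$. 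The counts $m'$ in different blocks are independent, and conditioning on the first block where $m'\ge xc$ perturbs the other blocks only by $o(1)$. Hence some inactive neuron fires with probability tending to one.

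The paper's own fixed-$n$ proof does not get around this. It uses the same domination $S\le\sum_\nu Z_\nu$ as you, but then asserts $\mathbb E[e^{tZ_\nu}]\le 1+C_tKc^{-n}$ with $C_t\sim t$ for fixed $t>0$. In fact $\mathbb E[e^{tZ_\nu}]\ge\frac{1}{2c}\exp\bigl(t\binom{\lfloor\lambda/2\rfloor}{n-1}\bigr)$. Its conclusion $e^{-c_0K}$ also contradicts the lower bound $\exp(-O(c^{n-1}))$ that $m'$ alone forces. You were right that concentration happens at scale $c^{n-1}$, not $K$. A correct proof must restrict to $n\ge3$, or add a smallness condition on $\alpha$ when $n=2$.
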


We now consider the regime $n=\kappa \log N$ of Amari's model.

\begin{theorem}\label{theo:amari_large_n}
	Consider Amari's model with interaction order
	\[
	n=\kappa \log N\in\mathbb N,
	\qquad \kappa<1,
	\]
	defined by \eqref{multdyn_inf} and \eqref{multhebb}. Choose
	\[
	h=\gamma^{n-1}\log^{n-1}N
	\]
	for some $0<\gamma<1$. Then there exists $\alpha_0=\alpha_0(\kappa,\gamma)>0$
	such that for every $\alpha<\alpha_0$, the choice
	\[
	M=\alpha^{n-1}\exp\bigl(\kappa(\log^2N-\log N\log\log N)\bigr)
	\]
    results in 
	\[
	\mathbb P\bigl(T(\xi^\mu)=\xi^\mu\bigr)\longrightarrow1
	\qquad\text{as }N\to\infty.
	\]
\end{theorem}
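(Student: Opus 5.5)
The plan is to fix $\mu=1$ and condition on the support $S:=\{j:\xi^1_j=1\}$ and its size $K:=|S|$. By definition of $\widehat T$, the local field at site $i$ is
\[
F_i=\sum_{\nu=1}^M \xi_i^\nu\,\bigl(|S\cap S_\nu\setminus\{i\}|\bigr)_{n-1},
\]
where $S_\nu$ is the support of $\xi^\nu$ and $(x)_{m}=x(x-1)\cdots(x-m+1)$ is the falling factorial. This formula holds because the restricted sum counts ordered $(n-1)$-tuples of distinct active indices of $\xi^1$ that are also active in $\xi^\nu$. All terms are nonnegative. Since $K\sim\mathrm{Bin}(N,p)$ has mean $\log N$, a Chernoff bound shows that with probability $1-o(1)$ we have $(1-\delta)\log N\le K\le(1+\delta)\log N$ for any fixed small $\delta>0$. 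I will work on this event throughout. In particular, $n-1=\kappa\log N-1<K$, since $\kappa<1$.

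\emph{Active sites.} Let $i\in S$. Dropping all $\nu\neq1$ (their contributions are nonnegative) gives
\[
F_i\ge (K-1)_{n-1}\ge (K-n+1)^{n-1}\ge \bigl((1-\delta-\kappa)\log N\bigr)^{n-1}.
\]
This exceeds $h=\gamma^{n-1}\log^{n-1}N$ as soon as $\gamma<1-\kappa-\delta$. No probabilistic estimate is needed here beyond the event for $K$, so all $|S|$ active sites are handled at once. If the crude bound $(K-n+1)^{n-1}$ is too lossy for the stated range of $\gamma$, I will use a Stirling bound for $(K-1)_{n-1}$ instead, which yields the constraint $\gamma<(1-\kappa)^{(1-\kappa)/\kappa}e$, with some room absorbed by $\delta$. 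In any case this step only restricts the admissible $\gamma$ (and possibly $\alpha_0$), and I expect it to be the place where the precise dependence $\alpha_0(\kappa,\gamma)$ and any hidden restriction on $\gamma$ enter.

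\emph{Inactive sites.} Let $i\notin S$. Then $\xi_i^1=0$, so only $\nu\ge2$ contribute. Conditionally on $S$, the patterns $\xi^\nu$ with $\nu\ge2$ are independent of $\xi^1$, and $\xi_i^\nu$ is independent of $S\cap S_\nu$ because $i\notin S$. Hence
\[
\mathbb E[F_i\mid S]=(M-1)\,p\,\mathbb E\bigl[(X)_{n-1}\bigr],\qquad X\sim\mathrm{Bin}(K,p).
\]
The factorial moment of a binomial is exact: $\mathbb E[(X)_{n-1}]=(K)_{n-1}p^{n-1}\le K^{n-1}p^{n-1}$. Using $\exp(\kappa\log^2N)=N^n$ and $\exp(-\kappa\log N\log\log N)=(\log N)^{-n}$, we have $M=\alpha^{n-1}p^{-n}$. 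Therefore
\[
\mathbb E[F_i\mid S]\le \alpha^{n-1}K^{n-1}\le \bigl(\alpha(1+\delta)\log N\bigr)^{n-1}.
\]
Markov's inequality then gives
\[
\mathbb P(F_i>h\mid S)\le \Bigl(\frac{\alpha(1+\delta)}{\gamma}\Bigr)^{n-1}=N^{-\kappa\log(\gamma/(\alpha(1+\delta)))+o(1)}.
\]
A union bound over the at most $N$ inactive sites gives a total error probability of $o(1)$, provided $\kappa\log\bigl(\gamma/(\alpha(1+\delta))\bigr)>1$. This holds for all $\alpha<\alpha_0:=\gamma e^{-1/\kappa}/(1+\delta)$ (with $\delta$ fixed small), and this fixes $\alpha_0(\kappa,\gamma)$.

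\emph{Conclusion and main obstacle.} Combining the event for $K$, the deterministic lower bound at active sites, and the union bound at inactive sites gives $\mathbb P(\widehat T(\xi^1)=\xi^1)\to1$, and by exchangeability the same holds for every $\mu$. The step I expect to require the most care is the inactive-site estimate. A priori one might fear that a plain first-moment bound cannot beat the union bound over $N$ sites, and that a more delicate tail bound (for instance a large-deviation bound for the sum of independent heavy-tailed terms $(|S\cap S_\nu|)_{n-1}$) is needed. The point of the argument is that, because $n$ grows like $\kappa\log N$, the Markov bound is exponentially small in $n$, hence polynomially small in $N$, which suffices. What remains is to check carefully the exact factorial-moment identity and the independence of $\xi^\nu_i$ from $S\cap S_\nu$ for $i\notin S$.
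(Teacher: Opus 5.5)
Your argument follows the paper's route. At active sites you use the deterministic lower bound $(K-1)_{n-1}$ coming from $\xi^1$ itself. At inactive sites you use a plain Markov bound built from the exact factorial moment $\mathbb E[(X)_{n-1}]=(K)_{n-1}p^{n-1}$ and the identity $Mp^n=\alpha^{n-1}$, followed by a union bound over $N$ sites. Conditioning on the support $S$, instead of relabelling and enlarging monotonically to $d=(2-\delta)\log N$ as the paper does, is only cosmetic. Your inactive-site estimate and your $\alpha_0=\gamma e^{-1/\kappa}/(1+\delta)$ are fine.

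The one place you differ is the active sites, and there you are right and the paper is not. The paper asserts $(\delta\log N-1)_{n-1}=(1+o(1))\delta^{n-1}\log^{n-1}N$ and concludes stability for every $\gamma<1$. But for $n=\kappa\log N$ the ratio of the two sides is $\prod_{j=1}^{n-1}\bigl(1-\frac{j}{\delta\log N}\bigr)\le\exp\bigl(-\frac{n(n-1)}{2\delta\log N}\bigr)=N^{-\kappa^2/(2\delta)+o(1)}$, and it even vanishes if $\delta<\kappa$. Your bound $(K-1)_{n-1}\ge(K-n+1)^{n-1}$ is correct and yields the theorem for $\gamma<1-\kappa$.

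The restriction you anticipated is genuinely necessary, not an artefact of the crude bound. By Stirling, if $|K-\log N|=o(\log N)$, which holds with probability tending to one, then $(K-1)_{n-1}=(\gamma^*(\kappa)+o(1))^{n-1}\log^{n-1}N$ with
\[
\gamma^*(\kappa)=e^{-1}(1-\kappa)^{-(1-\kappa)/\kappa}\in(1-\kappa,1).
\]
Meanwhile, the contribution of the patterns $\nu\ge2$ to the field at an active site has conditional mean at most $\alpha^{n-1}(K-1)_{n-1}$. Hence, for $\gamma>\gamma^*(\kappa)$ and $\alpha<1$, Markov's inequality shows that a fixed active site of $\xi^1$ is switched off with probability tending to one. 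So the statement as written (all $0<\gamma<1$) is false. What you prove is the correct version: $\gamma<1-\kappa$, which can be sharpened to $\gamma<\gamma^*(\kappa)$ up to $\delta$.

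Your Stirling threshold $e(1-\kappa)^{(1-\kappa)/\kappa}$ is wrong, however: it is the reciprocal of $\gamma^*(\kappa)$. It exceeds $1$ for every $\kappa\in(0,1)$, so it would wrongly suggest that the refinement removes the restriction on $\gamma$ altogether.
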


\begin{rem}
	At first sight, the capacity scale
	\[
	M=\alpha^{n-1}\exp\bigl(\kappa(\log^2N-\log N\log\log N)\bigr)
	\]
	may seem rather small compared with the exponentially large capacity
	\[
	M=\exp(\alpha N)
	\]
	obtained in \cite[Theorem~3]{DHLUV17}. However, the two situations are not directly
	comparable. In the generalized Hopfield model there are $2^N$ possible patterns,
	and typical patterns have order $N$ active coordinates. In the sparse setting
	considered here, by contrast, typical patterns have only order $\log N$ active
	coordinates, and the number of such typical sparse patterns is of order
	\[
	\exp(\log^2N-\log N\log\log N).
	\]
	In this sense, Theorem~\ref{theo:amari_large_n} is the natural sparse analogue of
	the large-capacity result in \cite{DHLUV17}.
\end{rem}

The preceding theorem has the following analogue for the Willshaw model. 

\begin{corollary}\label{cor:willshaw_large_n}
	Consider the higher-order Willshaw model with interaction order
	\[
	n=\kappa \log N\in\mathbb N,
	\qquad \kappa<1,
	\]
	defined by \eqref{multdyn_inf} and \eqref{willshawhebb}. Choose
	\[
	h=\gamma^{n-1}\log^{n-1}N
	\]
	for some $0<\gamma<1$. Then there exists $\alpha_0=\alpha_0(\kappa,\gamma)>0$
	such that for every $\alpha<\alpha_0$, the choice
	\[
	M=\alpha^{n-1}\exp\bigl(\kappa(\log^2N-\log N\log\log N)\bigr)
	\]
	satisfies
	\[
	\mathbb P\bigl(T(\xi^\mu)=\xi^\mu\bigr)\longrightarrow1
	\qquad\text{as }N\to\infty
	\]
	for every fixed stored pattern $\xi^\mu$.
\end{corollary}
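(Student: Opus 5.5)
Corollary~\ref{cor:willshaw_large_n} follows from Theorem~\ref{theo:amari_large_n} by a sandwich argument, just as Corollary~\ref{cor:willshaw_fixed_n} follows from Theorem~\ref{theo:amari_fixed_n}. Write $W^A$ for the Amari weights \eqref{multhebb} and $W^W=\Theta(W^A-1)$ for the Willshaw weights \eqref{willshawhebb}. Both are built from the same patterns with the same $n=\kappa\log N$, $h$ and $M$. Pointwise we have $W^W\le W^A$, since $W^W\in\{0,1\}$ and $W^W=1$ exactly when $W^A\ge1$. Moreover $W^W\ge \xi^\mu_{i}\xi^\mu_{j_1}\cdots\xi^\mu_{j_{n-1}}$, because the $\mu$-th pattern alone contributes to the sum.

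Let $S^A_i$ and $S^W_i$ denote the local fields at $\sigma=\xi^\mu$. Since the configuration entries are nonnegative, we get
\[
\xi^\mu_i\sum^*_{j_1,\dots,j_{n-1}}\xi^\mu_{j_1}\cdots\xi^\mu_{j_{n-1}}\;\le\;S^W_i\;\le\;S^A_i .
\]
The argument then splits according to the value of $\xi^\mu_i$.

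\emph{Sites with $\xi^\mu_i=0$.} Here $S^W_i\le S^A_i$, so $\widehat T^A_i(\xi^\mu)=0$ forces $\widehat T^W_i(\xi^\mu)=0$.

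\emph{Sites with $\xi^\mu_i=1$.} Let $K=|\xi^\mu|$. Then $S^W_i\ge (K-1)(K-2)\cdots(K-n+1)$, the number of ordered $(n-1)$-tuples of distinct active sites other than $i$. This lower bound is deterministic given $K$ and does not depend on $M$. Theorem~\ref{theo:amari_large_n} is presumably proved by showing exactly this: with high probability the signal term exceeds $h=\gamma^{n-1}\log^{n-1}N$. Concretely, $K\ge \gamma'\log N$ for some $\gamma'\in(\gamma,1)$ with high probability, by a Chernoff bound for $\mathrm{Bin}(N,\log N/N)$; lower-tail deviations of order $\log N$ have probability $N^{-c}$. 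On that event,
\[
(K-1)\cdots(K-n+1)\ge \bigl((\gamma'-\kappa)\log N\bigr)^{n-1}.
\]
This exceeds $h$ only if $\gamma'-\kappa>\gamma$. If $\gamma\ge 1-\kappa$ this fails, and then I would instead rely on the precise signal estimate from the proof of Theorem~\ref{theo:amari_large_n}, adjusting constants exactly as in that proof. The same event controls the Amari signal, so the Willshaw signal inherits the Amari bound.

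Combining the two cases, the event $\{\widehat T^A(\xi^\mu)=\xi^\mu\}$ is contained in $\{\widehat T^W(\xi^\mu)=\xi^\mu\}$, up to the event where the active sites fail the signal bound, and that event is already excluded in the Amari proof. Hence $\mathbb P(\widehat T^W(\xi^\mu)=\xi^\mu)\ge \mathbb P(\widehat T^A(\xi^\mu)=\xi^\mu)-o(1)\to1$, with the same $\alpha_0(\kappa,\gamma)$.

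The main obstacle is the active sites. The comparison $S^W\le S^A$ runs the wrong way there, so one must check that the Amari proof's control of active neurons uses only the self-contribution of pattern $\mu$, with noise discarded as nonnegative. Because all weights and spins are nonnegative, I expect this to hold, making the Willshaw active-site bound identical to the Amari one.
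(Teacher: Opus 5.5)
Your reduction is the same as the paper's. You handle inactive sites via $W^W\le W^A$ and the first-moment noise bound from the proof of Theorem~\ref{theo:amari_large_n}, and active sites via the self-contribution of $\xi^\mu$. You also correctly note that the theorem cannot be used as a black box at active sites.

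The genuine gap is your fallback for $\gamma\ge 1-\kappa$. The ``precise signal estimate'' you would borrow is the paper's claim $(\delta\log N-1)_{n-1}=(1+o(1))\,\delta^{n-1}\log^{n-1}N$. This is false for $n=\kappa\log N$. Writing $(\delta\log N-1)_{n-1}=(\delta\log N)^{n-1}\prod_{j=1}^{n-1}\bigl(1-\tfrac{j}{\delta\log N}\bigr)$, the product is a negative power of $N$ (or $0$ if $\delta<\kappa$), not $1+o(1)$.

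No choice of constants rescues this. At $\sigma=\xi^\mu$ and an active site $i$, the Willshaw field is \emph{exactly} $(K-1)_{n-1}$ with $K=|\xi^\mu|$. Only tuples inside the support of $\xi^\mu$ contribute, and each carries weight $1$ regardless of the other patterns. Since $K/\log N\to1$ in probability, a Riemann-sum (Stirling) computation gives
\[
(K-1)_{n-1}=(\log N)^{n-1}\exp\bigl(-(f(\kappa)+o(1))\log N\bigr),\qquad f(\kappa):=\kappa+(1-\kappa)\log(1-\kappa)>0,
\]
while $h=(\log N)^{n-1}\exp\bigl((\kappa\log\gamma+o(1))\log N\bigr)$. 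So for $\gamma>\gamma^*(\kappa):=e^{-f(\kappa)/\kappa}=e^{-1}(1-\kappa)^{-(1-\kappa)/\kappa}$, all active neurons switch off with probability tending to one, and $\mathbb P(T(\xi^\mu)=\xi^\mu)\to0$. For example, with $\kappa=1/2$ this happens for every $\gamma>2/e$. The corollary is therefore false as stated for $\gamma$ near $1$. So is Theorem~\ref{theo:amari_large_n}: its extra noise at active sites has conditional mean at most $\alpha^{n-1}(K-1)_{n-1}$, which is negligible.

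What your argument does establish is still worth keeping. Your crude bound $(K-1)_{n-1}\ge((\gamma'-\kappa)\log N)^{n-1}$ on $\{K\ge\gamma'\log N\}$ is correct. Together with the (valid) first-moment estimate at inactive sites, it proves the corollary for $\gamma<1-\kappa$. Replacing it by the asymptotics above, applied on $\{K\ge(1-\epsilon)\log N\}$ with $\epsilon$ small, extends the proof to the sharp range $\gamma<\gamma^*(\kappa)$, which lies strictly between $1-\kappa$ and $1$. Your suspicion was justified; the correct repair is to restrict $\gamma$, not to appeal to the estimate in the paper.
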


The analysis of the GB-model with fixed $n$ does not verbatim carry over to the situation with growing $n$.
However, we can show
\begin{theorem}\label{thm:GB_logarithmic_n}
	Assume that $c=c(l)\sim \log l$ and let
	\[
	n=\kappa c
	\]
	for some fixed $\kappa>0$. Let
	\[
	M=\alpha c^n
	\]
	for some $\alpha\in(0,1)$, and choose
	\[
	h=\gamma \binom{l-1}{n-1}
	\]
	with $\alpha<\gamma<1$.
	Then, for every stored message $\xi^\mu$,
	\[
	\mathbb P\bigl(T(\xi^\mu)=\xi^\mu\bigr)\longrightarrow1
	\qquad\text{as } l\to\infty.
	\]
\end{theorem}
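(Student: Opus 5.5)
The proof has two parts. Active neurons are handled deterministically. For inactive neurons we bound the normalized local field by a sum of i.i.d.\ terms over the other messages, and control it with a second-moment (Chebyshev) bound strong enough to survive a union bound over all $lc$ neurons. Throughout write $k:=n-1=\kappa c-1$.

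\emph{Active neurons.} Fix the stored message $\xi^\mu$ and a neuron $(a,k_a)$ with $\xi^\mu_{(a,k_a)}=1$. For every choice of $n-1$ further blocks, the term with $k_b$ equal to the active neuron of $\xi^\mu$ in block $b$ has $\sigma$-product $1$. Its weight \eqref{eq:GBweights} equals $1$ because $\xi^\mu$ itself contributes. Hence $S_{(a,k_a)}(\xi^\mu)=\binom{l-1}{n-1}>h$ deterministically, since $\gamma<1$, and $T_{(a,k_a)}(\xi^\mu)=1$.

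\emph{Inactive neurons: reduction to a sum over messages.} Now take $(a,k')$ with $\xi^\mu_{(a,k')}=0$. In $S_{(a,k')}(\xi^\mu)$ only the $k_b$ equal to the $\mu$-active neuron in each chosen block survive. So $S_{(a,k')}(\xi^\mu)$ is the number of sets $B$ of $k$ blocks (not containing $a$) for which some $\nu\neq\mu$ has $\xi^\nu_{(a,k')}=1$ and agrees with $\xi^\mu$ on every block of $B$. Let $A_\nu$ be the set of blocks $b\ne a$ on which $\nu$ agrees with $\mu$. Then
\[
\frac{S_{(a,k')}(\xi^\mu)}{\binom{l-1}{k}}\le \sum_{\nu\ne\mu} X_\nu,\qquad X_\nu:=\mathbf 1_{\{\xi^\nu_{(a,k')}=1\}}\frac{\binom{|A_\nu|}{k}}{\binom{l-1}{k}} .
\]
Conditionally on $\xi^\mu$, the $X_\nu$ are i.i.d. Moreover $|A_\nu|\sim\mathrm{Bin}(l-1,1/c)$, independent of the indicator, which has probability $1/c$.

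\emph{First and second moments.} Counting $k$-subsets gives exactly
\[
\mathbb E\binom{|A_\nu|}{k}=\binom{l-1}{k}c^{-k},\qquad\text{so}\qquad \mathbb E\sum_\nu X_\nu\le M c^{-1}c^{-k}=\alpha .
\]
For the second moment, write $\binom{|A|}{k}^2$ as the number of ordered pairs of $k$-subsets of $A$. A pair whose intersection has size $j$ lies in $A$ with probability $c^{-(2k-j)}$. Hence
\[
\mathbb E\binom{|A|}{k}^2=\binom{l-1}{k}^2c^{-2k}\,\mathbb E\bigl[c^{J}\bigr],
\]
where $J$ is hypergeometric: the overlap of two uniform $k$-subsets of an $(l-1)$-set. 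Dominating $J$ by a $\mathrm{Bin}(k,k/(l-1-k))$ variable gives
\[
\mathbb E c^J\le\Bigl(1+\frac{k(c-1)}{l-1-k}\Bigr)^k\le\exp\bigl(2k^2c/l\bigr)\to1,
\]
since $k^2c=O(\log^3 l)$. Therefore
\[
\mathrm{Var}\Bigl(\sum_\nu X_\nu\Bigr)\le M\,\mathbb E X_\nu^2\le (1+o(1))\,\alpha c^{n}\,c^{-1}c^{-2k}=(1+o(1))\,\alpha c^{1-n}.
\]

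\emph{Concentration and union bound.} Since $\gamma>\alpha$, Chebyshev gives
\[
\mathbb P\bigl(S_{(a,k')}(\xi^\mu)> h\bigr)\le\frac{(1+o(1))\,\alpha c^{1-n}}{(\gamma-\alpha)^2}.
\]
A union bound over the at most $lc$ inactive neurons yields total failure probability $O\bigl(lc\cdot c^{1-\kappa c}\bigr)$. Because $c\sim\log l$, we have $c^{\kappa c}=\exp\bigl(\kappa(1+o(1))\log l\,\log\log l\bigr)$, which beats any power of $l$. So this probability tends to $0$, which together with the active-neuron step proves $\mathbb P(T(\xi^\mu)=\xi^\mu)\to1$.

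The main obstacle is the second-moment step. The naive bound $X_\nu\le1$ only gives $\mathrm{Var}=O(\alpha)$, and Markov only gives probability $\alpha/\gamma$, neither of which survives the union bound. One needs the exact subset-pair counting and the estimate $\mathbb E c^J\to1$, which uses $n^2c/l\to0$.
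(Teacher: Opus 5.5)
Your argument is correct, but it controls the false-neuron field by a different mechanism than the paper. The paper works by truncation and Chernoff bounds. It shows that each overlap count $X_\nu\sim\mathrm{Bin}(l-1,1/c)$ lies within relative distance $\eta/c$ of $\lambda=(l-1)/c$, except with probability $2\exp(-C l/\log^3 l)$; this is small enough to intersect over all $M$ messages. On that event it bounds $\binom{X_\nu}{n-1}$ deterministically by $(1+o(1))e^{2\kappa\eta}c^{-(n-1)}\binom{l-1}{n-1}$, using $(1+2\eta/c)^{\kappa c}\le e^{2\kappa\eta}$. It then applies a second Chernoff bound to the number $\sum_\nu B_\nu\sim\mathrm{Bin}(M-1,1/c)$ of messages active at $(a,k)$, choosing $\eta$ with $(1+\eta)e^{2\kappa\eta}\alpha<\gamma$.

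You instead use the second-moment method. The exact identities for $\mathbb E\binom{|A_\nu|}{n-1}$ and $\mathbb E\binom{|A_\nu|}{n-1}^2$, together with $\mathbb E c^J\to1$, say that $\binom{|A_\nu|}{n-1}$ has relative second moment $1+o(1)$. This plays the role of the paper's concentration of $X_\nu$, and Chebyshev finishes. Your route is shorter. It avoids the truncation, the integrality conventions for $(1\pm\varepsilon_c)\lambda$, and the separate estimate for $\sum_\nu B_\nu$; the growth of $n$ enters only through $n^2c/l\to0$ and $c^{\kappa c}\gg l^{C}$.

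The price is a much weaker tail. Your per-neuron error is only $O(c^{1-n})$, and it beats the union bound over $lc$ neurons solely because $c^{\kappa c}=l^{\kappa(1+o(1))\log\log l}$ is super-polynomial in $l$. Consequently the same method would not prove Theorem~\ref{thm:GB_fixed_n}, where $c^{1-n}$ is only polylogarithmically small. It also would not survive a further union bound over all $M=\alpha c^n$ messages, which gives roughly $\alpha^2 l c^2/(\gamma-\alpha)^2$. The paper's stretched-exponential bounds, of order $M e^{-Cl/\log^3 l}+e^{-C\alpha c^{n-1}}$ per neuron, are strong enough to give simultaneous stability of all stored messages as well.
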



\section{Proofs}

\begin{proof}[Proof of Theorem \ref{theo:amari_fixed_n}]
	Without loss of generality, it suffices to consider $\mu=1$.
	
	Fix $\delta\in(0,1)$ and define
	\begin{equation}\label{eq:Adelta_new}
		A_\delta:=\left\{\left|\sum_{j=1}^N \xi_j^1-\log N\right|\le (1-\delta)\log N\right\}.
	\end{equation}
	By the law of large numbers,
	\[
	\mathbb P(A_\delta)\longrightarrow1
	\qquad\text{as }N\to\infty.
	\]
	
	We first consider coordinates $i$ such that $\xi_i^1=1$. On the event $A_\delta$,
	the pattern $\xi^1$ has at least $\delta\log N$ active coordinates. Hence the
	contribution of the message $\mu=1$ alone yields
	\[
	\sum_{1\le j_1,\ldots,j_{n-1}\le N}^{*}
	\xi_{j_1}^1\cdots \xi_{j_{n-1}}^1
	W_{i,j_1,\ldots,j_{n-1}}
	\ge
	(n-1)!\binom{\delta\log N-1}{n-1}.
	\]
	Note that, for every $\varepsilon>0$ and all $N$ large enough, we have
	\begin{equation}\label{eq:amari_pos_lower}
		(n-1)!\binom{\delta\log N-1}{n-1}
		\ge
		(1-\varepsilon)\delta^{\,n-1}(\log N)^{n-1}.
	\end{equation}
	Choosing $\delta$ such that $\delta^{\,n-1}>\gamma$, it follows that
	\[
	T_i(\xi^1)=1
	\qquad\text{on the set }A_\delta
	\]
	for all sufficiently large $N$.
	
	Next consider a coordinate $i$ such that $\xi_i^1=0$. On the event $A_\delta$, the
	number of active coordinates of $\xi^1$ lies between $\delta\log N$ and
	\[
	d:=(2-\delta)\log N
	\]
and we assume, without loss of generality, that those active coordinates are the first ones. 
    
	Conditioning on the number $d'$ of active coordinates of $\xi^1$, we obtain
	\begin{align}
		&\mathbb P\bigl(\{T_i(\xi^1)\neq \xi_i^1\}\cap\{\xi_i^1=0\}\cap A_\delta\bigr)
		\nonumber\\
		\le&
		\sum_{d'=\delta\log N}^{d}
		\mathbb P\left(
		\left\{
		\sum_{i_1,\ldots,i_{n-1}\le d'}^{*}\sum_{\mu=2}^M
		\xi_i^\mu\xi_{i_1}^\mu\cdots\xi_{i_{n-1}}^\mu
		\ge h
		\right\}
		\,\middle|\,
		\sum_{j=1}^N \xi_j^1=d'
		\right)
		\mathbb P\left(\sum_{j=1}^N \xi_j^1=d'\right)
		\nonumber\\
		\le&
		\max_{\delta\log N\le d'\le d}
		\mathbb P\left(
		\left\{
		\sum_{i_1,\ldots,i_{n-1}\le d'}^{*}\sum_{\mu=2}^M
		\xi_i^\mu\xi_{i_1}^\mu\cdots\xi_{i_{n-1}}^\mu
		\ge h
		\right\}
		\,\middle|\,
		\sum_{j=1}^N \xi_j^1=d'
		\right).
		\label{eq:amari_zero_condensed_final}
	\end{align}
	Since all summands are nonnegative, the probability on the right-hand side is
	increasing in $d'$. Hence it is bounded above by the corresponding expression
	with $d'=d$. Relabelling the active coordinates if necessary, we may therefore
	assume that
	\[
	\xi_1^1=\cdots=\xi_d^1=1
	\qquad \text{and} \quad
	\xi_{d+1}^1=\cdots=\xi_N^1=0,
	\]
	and obtain
	\begin{multline}
		\mathbb P\bigl(\{T_i(\xi^1)\neq \xi_i^1\}\cap\{\xi_i^1=0\}\cap A_\delta\bigr)
		\\\le \mathbb P\left(
		\sum_{i_1,\ldots,i_{n-1}\le d}^{*}\sum_{\mu=2}^M
		\xi_i^\mu\xi_{i_1}^\mu\cdots\xi_{i_{n-1}}^\mu
		\ge h
		\right).
		\label{eq:amari_zero_reduced_final}
	\end{multline}
	
	Using Markov's inequality, we obtain for every $t>0$,
	\begin{multline}
		\mathbb P\left(
		\sum_{i_1,\ldots,i_{n-1}\le d}^{*}\sum_{\mu=2}^M
		\xi_i^\mu\xi_{i_1}^\mu\cdots\xi_{i_{n-1}}^\mu
		\ge h
		\right)
		\\\le e^{-th}
		\mathbb E\left[
		\exp\left(
		t\sum_{i_1,\ldots,i_{n-1}\le d}^{*}\sum_{\mu=2}^M
		\xi_i^\mu\xi_{i_1}^\mu\cdots\xi_{i_{n-1}}^\mu
		\right)
		\right].
		\label{eq:amari_markov_final}
	\end{multline}
	By independence of the patterns,
	\begin{align}
		&
		\mathbb E\left[
		\exp\left(
		t\sum_{i_1,\ldots,i_{n-1}\le d}^{*}\sum_{\mu=2}^M
		\xi_i^\mu\xi_{i_1}^\mu\cdots\xi_{i_{n-1}}^\mu
		\right)
		\right]
		\nonumber\\
		=&
		\mathbb E\left[
		\exp\left(
		t\sum_{i_1,\ldots,i_{n-1}\le d}^{*}
		\xi_i^2\xi_{i_1}^2\cdots\xi_{i_{n-1}}^2
		\right)
		\right]^{M-1}
		\nonumber\\
		\le&
		\mathbb E\left[
		\exp\left(
		t\sum_{i_1,\ldots,i_{n-1}\le d}^{*}
		\xi_i^2\xi_{i_1}^2\cdots\xi_{i_{n-1}}^2
		\right)
		\right]^M,
		\label{eq:amari_pattern_factor}
	\end{align}
	since the expectation on the right-hand side is at least $1$. Taking expectation
	with respect to $\xi_i^2$ first, we get
	\begin{multline}
		\mathbb E\left[
		\exp\left(
		t\sum_{i_1,\ldots,i_{n-1}\le d}^{*}
		\xi_i^2\xi_{i_1}^2\cdots\xi_{i_{n-1}}^2
		\right)
		\right]
 	\\=	1-p+
		p\,\mathbb E\left[
		\exp\left(
		t\sum_{i_1,\ldots,i_{n-1}\le d}^{*}
		\xi_{i_1}^2\cdots\xi_{i_{n-1}}^2
		\right)
		\right].
		\label{eq:amari_xi_i_first}
	\end{multline}
	
	Set $x_j:=\xi_j^2$ for $j=1,\dots,d$, and define
	\[
	h(x_{i_1},\ldots,x_{i_{n-1}}):=x_{i_1}\cdots x_{i_{n-1}}.
	\]
	Then
	\[
	\sum_{i_1,\ldots,i_{n-1}\le d}^{*}\xi_{i_1}^2\cdots\xi_{i_{n-1}}^2
	=
	(n-1)!\binom{d}{n-1}U_d(h),
	\]
	where
	\[
	U_d(h):=
	\frac{1}{\binom{d}{n-1}}
	\sum_{1\le i_1<\cdots<i_{n-1}\le d}
	h(x_{i_1},\ldots,x_{i_{n-1}})
	\]
	is a $U$-statistic of degree $n-1$.
	
	We now use Hoeffding's decomposition idea \cite{Hoeffding1963}. Set
	\[
	k:=\left\lfloor\frac{d}{n-1}\right\rfloor
	\]
	and
	\[
	B(x_1,\ldots,x_d):=
	\frac1k\sum_{r=0}^{k-1}
	h(x_{r(n-1)+1},\ldots,x_{(r+1)(n-1)}).
	\]
Then 
$$
U_d(h)= \frac 1 {d!} \sum_{\sigma \in S_d} B(x_{\sigma(1)}, \ldots, x_{\sigma(d)}).
$$
where, of course, $S_d$ is the symmetric group over $\{1,\ldots, d\}$. Hence, for any $T\in \R$
\begin{multline*}
\exp(T U_d(h)) = \exp\left(T\frac 1 {d!} \sum_{\sigma \in S_d} B(x_{\sigma(1)}, \ldots, x_{\sigma(d)})\right) \\ \le \frac 1 {d!} \sum_{\sigma \in S_d} \exp\left(T B(x_{\sigma(1)}, \ldots, x_{\sigma(d)})\right)
\end{multline*}
by Jensen's inequality. 
	Since the blocks in the definition of $B$ are disjoint, the corresponding random
	variables are independent, and therefore
	\begin{align}
		\mathbb E[e^{TU_d(h)}]
		&\le
		\prod_{r=0}^{k-1}
		\mathbb E\left[
		\exp\left(
		\frac{T}{k}h(x_{r(n-1)+1},\ldots,x_{(r+1)(n-1)})
		\right)
		\right]
		\nonumber\\
		&=
		\left(
		\mathbb E\left[
		\exp\left(\frac{T}{k}h(x_1,\ldots,x_{n-1})\right)
		\right]
		\right)^k.
		\label{eq:amari_ustat_block}
	\end{align}
	Now $h(x_1,\ldots,x_{n-1})$ is Bernoulli distributed with success probability
	$p^{n-1}$. Hence
	\[
	\mathbb E\left[
	\exp\left(\frac{T}{k}h(x_1,\ldots,x_{n-1})\right)
	\right]
	=
	1-p^{n-1}+p^{n-1}e^{T/k}.
	\]
	Therefore
	\[
	\mathbb E[e^{TU_d(h)}]
	\le
	\left(1-p^{n-1}+p^{n-1}e^{T/k}\right)^k.
	\]
	
	We now choose
	\[
	T=t (n-1)!\binom{d}{n-1}.
	\]
	Since $n$ is fixed, there exists a constant $C_n>0$ such that
	\[
	\frac{(n-1)!\binom{d}{n-1}}{k}\le C_n d^{n-2}
	\]
	for all large $d$. Thus
	\begin{align}
		&
		\mathbb E\left[
		\exp\left(
		t\sum_{i_1,\ldots,i_{n-1}\le d}^{*}
		\xi_{i_1}^2\cdots\xi_{i_{n-1}}^2
		\right)
		\right]
\le		\left(1-p^{n-1}+p^{n-1}e^{C_n t d^{n-2}}\right)^k.
		\label{eq:amari_ustat_mgf}
	\end{align}
	Combining \eqref{eq:amari_markov_final}, \eqref{eq:amari_pattern_factor},
	\eqref{eq:amari_xi_i_first}, and \eqref{eq:amari_ustat_mgf}, we obtain
	\begin{multline}
		\mathbb P\left(
		\sum_{i_1,\ldots,i_{n-1}\le d}^{*}\sum_{\mu=2}^M
		\xi_i^\mu\xi_{i_1}^\mu\cdots\xi_{i_{n-1}}^\mu
		\ge h
		\right)
	\\ \le	\exp\left(
		-th
		+
		Mp\left[
		\left(1-p^{n-1}+p^{n-1}e^{C_n t d^{n-2}}\right)^k-1
		\right]
		\right).
		\label{eq:amari_main_exp_final}
	\end{multline}
	
	Now choose
	\[
	t=\frac{a}{d^{n-2}}
	\]
	with $a>0$ to be fixed later. Then
	\[
	C_n t d^{n-2}=C_n a,
	\]
	and therefore
	\[
	\left(1-p^{n-1}+p^{n-1}e^{C_n t d^{n-2}}\right)^k
	=
	\left(1+p^{n-1}(e^{C_n a}-1)\right)^k.
	\]
	Since
	\[
	k\,p^{n-1}\asymp d\left(\frac{\log N}{N}\right)^{n-1}\longrightarrow0,
	\]
	we have
	\[
	\left(1+p^{n-1}(e^{C_n a}-1)\right)^k-1
	=
	(1+o(1))\,k\,p^{n-1}(e^{C_n a}-1).
	\]
	Hence the exponent in \eqref{eq:amari_main_exp_final} is bounded by
	\begin{align}
		-th + (1+o(1))Mp^n k (e^{C_n a}-1).
		\label{eq:amari_exponent_prelim}
	\end{align}
	Now recall that 
	\[
	h=\gamma (\log N)^{n-1}\qquad \text{and} \quad d=(2-\delta)\log N,
	\]
	so
	\[
	th
	=
	\frac{a}{d^{n-2}}\gamma(\log N)^{n-1}
	=
	(1+o(1))\frac{a\gamma}{(2-\delta)^{n-2}}\log N.
	\]
	Moreover,
	\[
	Mp^n
	=
	\alpha \frac{N^n}{(\log N)^n}\cdot \frac{(\log N)^n}{N^n}
	=
	\alpha,
	\]
	and
	\[
	k=(1+o(1))\frac{d}{n-1}
	=
	(1+o(1))\frac{2-\delta}{n-1}\log N.
	\]
	Thus \eqref{eq:amari_exponent_prelim} becomes
	\[
	-(1+o(1))\frac{a\gamma}{(2-\delta)^{n-2}}\log N
	+
	(1+o(1))\alpha\frac{2-\delta}{n-1}(e^{C_n a}-1)\log N.
	\]
	Since
	\[
	e^{C_n a}-1=C_n a+O(a^2)
	\qquad\text{as }a\downarrow0,
	\]
	we may choose $a>0$ small enough and then $\alpha>0$ small enough such that
	\[
	\frac{a\gamma}{(2-\delta)^{n-2}}
	>
	2\alpha\frac{2-\delta}{n-1}(e^{C_n a}-1).
	\]
Since
\[
\frac{a\gamma}{(2-\delta)^{n-2}}
>
2\alpha\frac{2-\delta}{n-1}(e^{C_n a}-1),
\]
there exists $\rho=\rho(n,\gamma,\delta,\alpha,a)>0$ such that for all sufficiently
large $N$, the exponent in \eqref{eq:amari_main_exp_final} is bounded above by
\[
-(1+\rho)\log N.
\]
Consequently,
\[
\mathbb P\bigl(T_i(\xi^1)\neq \xi_i^1,\ \xi_i^1=0,\ A_\delta\bigr)
\le
N^{-(1+\rho)}.
\]
In particular, there exists $c=c(n,\gamma,\delta,\alpha)>1$ such that
\[
\mathbb P\bigl(T_i(\xi^1)\neq \xi_i^1,\ \xi_i^1=0,\ A_\delta\bigr)
\le
N^{-c}
\]
for all sufficiently large $N$.

	Combining this with the deterministic stability of the active coordinates on
	$A_\delta$, we conclude by a union bound that
	\[
	\mathbb P\bigl(T(\xi^1)\neq \xi^1,\ A_\delta\bigr)
	\le
	N\cdot N^{-c}
	\longrightarrow0.
	\]
	Since moreover $\mathbb P(A_\delta^c)\to0$, it follows that
	\[
	\mathbb P\bigl(T(\xi^1)=\xi^1\bigr)\longrightarrow1.
	\]
	This proves the theorem.
\end{proof}

Let us next turn to the proof of Corollary \ref{cor:willshaw_fixed_n}:

\begin{proof}[Proof of Corollary \ref{cor:willshaw_fixed_n}]
	We argue as in the proof of Theorem~\ref{theo:amari_fixed_n}, again conditioning
	on the event $A_\delta$ defined in \eqref{eq:Adelta_new}.
	
	First consider a coordinate $i$ such that $\xi_i^1=1$. On the event $A_\delta$,
	the message $\xi^1$ itself creates all relevant $(n-1)$-tuples of active
	coordinates, and therefore the local field in the Willshaw model is bounded below by
	\[
	\sum_{1\le j_1,\ldots,j_{n-1}\le N}^{*}
	\xi_{j_1}^1\cdots \xi_{j_{n-1}}^1,
	\]
	exactly as in the Amari model. Hence, for the same choice of $\delta$, every
	active coordinate of $\xi^1$ remains stable for all sufficiently large $N$.
	
	Now let $i$ be such that $\xi_i^1=0$. For every choice of pairwise distinct
	indices $i_1,\ldots,i_{n-1}$, we have
	\[
	\Theta\left(
	\sum_{\mu=2}^M
	\xi_i^\mu\xi_{i_1}^\mu\cdots\xi_{i_{n-1}}^\mu-1
	\right)
	\le
	\sum_{\mu=2}^M
	\xi_i^\mu\xi_{i_1}^\mu\cdots\xi_{i_{n-1}}^\mu.
	\]
	Thus the local field at a zero-coordinate in the Willshaw model is bounded above
	by the corresponding local field in Amari's model. In particular, whenever a
	zero-coordinate is turned into a $1$ by the Willshaw dynamics, the same happens
	in the Amari dynamics.
	
	Therefore the error probability for inactive coordinates in the Willshaw model is
	bounded by the corresponding error probability in Theorem~\ref{theo:amari_fixed_n}.
	Combining this with the stability of active coordinates on $A_\delta$ and using
	$\mathbb P(A_\delta)\to1$, the claim follows.
\end{proof}

\begin{proof}[Proof of Theorem \ref{thm:GB_fixed_n}]
	It suffices to consider a fixed stored message, say $\xi^1$.
	Let $(a,k_a)$ denote the unique active neuron of $\xi^1$ in block $a \in\{1,\ldots,l\}$.
	
	First consider a correct neuron $(a,k_a)$. For every choice of pairwise distinct
	blocks $a_2,\dots,a_n\in\{1,\dots,l\}\setminus\{a\}$, the hyperedge
	\[
	((a,k_a),(a_2,k_{a_2}),\dots,(a_n,k_{a_n}))
	\]
	is present, since it is created by the message $\xi^1$ itself. Hence
	\[
	S_{(a,k_a)}(\xi^1)\ge \binom{l-1}{n-1}.
	\]
	Since $h=\gamma \binom{l-1}{n-1}$ with $\gamma<1$, it follows that
	\[
	T_{(a,k_a)}(\xi^1)=1.
	\]
	
	Now let $(a,k)$ be such that $\xi^1_{(a,k)}=0$. Write
	\[
	K:=\binom{l-1}{n-1}.
	\]
	For $\nu\ge2$ define
	\[
	X_\nu:=\sum_{b\ne a}\mathbf 1\{\xi^\nu_{(b,k_b)}=1\}
	\qquad \text{and }\quad 
	Z_\nu:=\mathbf 1\{\xi^\nu_{(a,k)}=1\}\binom{X_\nu}{n-1}.
	\]
	Then
	\[
	S_{(a,k)}(\xi^1)\le \sum_{\nu=2}^M Z_\nu .
	\]
	Indeed, every summand contributing to $S_{(a,k)}(\xi^1)$ is generated by at least one message $\xi^\nu$, $\nu\ge2$, counted by $Z_\nu$. Since the same hyperedge may be generated by several messages, this yields only an upper bound.
	
	Moreover, $X_\nu\sim\mathrm{Bin}(l-1,1/c)$, and $\binom{X_\nu}{n-1}$ counts the number of $(n-1)$-subsets of blocks on which $\xi^\nu$ agrees with $\xi^1$. Hence
	\[
	\mathbb E\!\left[\binom{X_\nu}{n-1}\right]
	=
	\binom{l-1}{n-1}c^{-(n-1)}.
	\]
Indeed, note that for every subset
$
A\subset\{1,\dots,l-1\}$
with $|A|=n-1$,
the indicator
$
I_A:=\mathbf 1\{\text{$\xi^\nu$ agrees with $\xi^1$ in all blocks from $A$}\}
$
satisfies
\[
\binom{X_\nu}{n-1}=\sum_{A:\,|A|=n-1} I_A,
\]
Therefore,
\[
\mathbb E\!\left[\binom{X_\nu}{n-1}\right]
=
\sum_{A:\,|A|=n-1}\mathbb E[I_A]
=
\sum_{A:\,|A|=n-1}\mathbb P(I_A=1).
\]
For each fixed set $A$ with $|A|=n-1$, the choices in the corresponding blocks are
independent, and the probability of a match in any given block is $1/c$. Hence
\[
\mathbb P(I_A=1)=c^{-(n-1)}. 
\]
Since there are exactly $\binom{l-1}{n-1}$ such subsets $A$, it follows that
\[
\mathbb E\left[\binom{X_\nu}{n-1}\right]
=
\binom{l-1}{n-1}c^{-(n-1)}.
\]
   
	Moreover, 
	\[
	\mathbb E[Z_\nu]
	=
	\frac1c\,\mathbb E\!\left[\binom{X_\nu}{n-1}\right]
	=
	\frac{1}{c}\binom{l-1}{n-1}\frac{1}{c^{\,n-1}}
	=
	Kc^{-n}.
	\]
	Hence
	\[
	\mathbb E\Big[\sum_{\nu=2}^M Z_\nu\Big]
	=
	(M-1)Kc^{-n}
	=
	(\alpha+o(1))K.
	\]
	
	Since the variables $(Z_\nu)_{\nu\ge2}$ are independent, an exponential Markov
	inequality yields, for every $t>0$,
	\[
	\mathbb P\bigl(S_{(a,k)}(\xi^1)\ge h\bigr)
	\le
	e^{-th}\prod_{\nu=2}^M \mathbb E[e^{tZ_\nu}].
	\]
		For fixed $n$ and sufficiently small $t>0$, one can choose the constant $C_t$ with 
$ \frac{C_t}{t}\to 1 \text{ as } t\downarrow 0$
	such that
	\[
	\mathbb E[e^{tZ_\nu}]
	\le 1 + C_t K c^{-n}
	\]
	Therefore, using that $h=\gamma K$ and that $1+x\le e^x$ for all $x$,
	\[
	\mathbb P\bigl(S_{(a,k)}(\xi^1)\ge h\bigr)
	\le
	\exp\Bigl(-t\gamma K + C_t M K c^{-n}\Bigr)
	=
	\exp\Bigl(-(t\gamma-C_t\alpha+o(1))K\Bigr).
	\]
	Since $\alpha<\gamma$, and $ \frac{C_t}{t}\to 1 \text{ as } t\downarrow 0$ (which is to say that $C_t$ is of order $t$ for small $t$) we may choose $t>0$ sufficiently small so that
	\[
	t\gamma>C_t\alpha.
	\]
	Hence
	\[
	\mathbb P\bigl(S_{(a,k)}(\xi^1)\ge h\bigr)\le e^{-c_0 K}
	\]
	for some $c_0>0$ and all $l$ large enough.
	
	Finally, there are $l(c-1)$ neurons $(a,k)$ with $\xi^1_{(a,k)}=0$, hence by the
	union bound
	\[
	\mathbb P\bigl(\exists (a,k): T_{(a,k)}(\xi^1)\ne \xi^1_{(a,k)}\bigr)
	\le
	l(c-1)e^{-c_0K}\longrightarrow0,
	\]
	since $K=\binom{l-1}{n-1}\asymp l^{n-1}$ for fixed $n$.
	This proves the theorem.
\end{proof}

Let us now turn to the situation where in Amari's model we choose $n=\kappa \log N\in N$ for some $\kappa <1$.
\begin{proof}[Proof of Theorem \ref{theo:amari_large_n}]
	The proof is simpler than in the fixed-order $n$ case. Indeed, for fixed $n$, the
	false field is a sum of many small but non-negligible contributions, and its
	control requires an exponential-moment estimate. When $n=\kappa\log N$, however,
	the relevant coincidence probabilities become so small that a first-moment bound
	already suffices.
	
	As before, it suffices to consider a fixed stored pattern, say $\xi^1$.
	
	Fix $\delta\in(\gamma,1)$ and let
	\[
	A_\delta:=\left\{\left|\sum_{j=1}^N \xi_j^1-\log N\right|\le (1-\delta)\log N\right\}.
	\]
	Then
	\[
	\mathbb P(A_\delta)\longrightarrow1
	\qquad\text{as }N\to\infty.
	\]
	
	We first consider a coordinate $i$ such that $\xi_i^1=1$. On the event $A_\delta$,
	the pattern $\xi^1$ has at least $\delta\log N$ active coordinates. Hence the
	contribution of the message $\mu=1$ alone yields
	\[
	\sum_{1\le j_1,\ldots,j_{n-1}\le N}^{*}
	\xi_{j_1}^1\cdots \xi_{j_{n-1}}^1
	W_{i,j_1,\ldots,j_{n-1}}
	\ge
	(\delta\log N-1)_{n-1},
	\]
	where $(x)_m=x(x-1)\cdots(x-m+1)$ denotes the falling factorial. Since
	$n=\kappa\log N$ with $\kappa<1$, we have
	\[
	(\delta\log N-1)_{n-1}
	=
	(1+o(1))\,\delta^{\,n-1}\log^{n-1}N.
	\]
	Because $\delta>\gamma$, it follows that
	\[
	(\delta\log N-1)_{n-1}> \gamma^{n-1}\log^{n-1}N=h
	\]
	for all sufficiently large $N$. Thus
	\[
	T_i(\xi^1)=1
	\qquad\text{on }A_\delta
	\]
	for all large $N$.
	
	Now consider a coordinate $i$ such that $\xi_i^1=0$. On the event $A_\delta$, the
	number of active coordinates of $\xi^1$ is at most
	\[
	d:=(2-\delta)\log N.
	\]
	Relabelling these active coordinates if necessary, we may assume that
	\[
	\xi_1^1=\cdots=\xi_d^1=1,
	\qquad
	\xi_{d+1}^1=\cdots=\xi_N^1=0.
	\]
Then the local field at site $i$ is bounded from above by
\[
\sum_{\mu=2}^M \xi_i^\mu (X_\mu)_{n-1},
\]
where
\[
X_\mu:=\sum_{j=1}^d \xi_j^\mu
\sim \mathrm{Bin}(d,p),
\qquad \text{and again} \quad p=\frac{\log N}{N}.
\]
Indeed, for a fixed pattern $\mu\ge2$, the random variable $X_\mu$ counts how many
of the first $d$ active coordinates of $\xi^1$ are also active in $\xi^\mu$.
Hence, if $\xi_i^\mu=1$, the number of ordered $(n-1)$-tuples
$(i_1,\dots,i_{n-1})$ of pairwise distinct indices among these matching
coordinates is at most
\[
(X_\mu)_{n-1}:=X_\mu(X_\mu-1)\cdots(X_\mu-n+2),
\]
that is, the falling factorial of order $n-1$. We use the  convention
that
\[
(X_\mu)_{n-1}=0
\qquad\text{whenever } \, X_\mu<n-1.
\]
In particular, if $X_\mu=0$, then $(X_\mu)_{n-1}=0$, which simply reflects the
fact that in this case there are no matching coordinates at all and hence no
contribution of the pattern $\mu$ to the local field. Summing over $\mu=2,\dots,M$
yields the claimed upper bound.

	Hence, using Markov's inequality,
	\begin{align}
		\mathbb P\bigl(T_i(\xi^1)\neq \xi_i^1,\ \xi_i^1=0,\ A_\delta\bigr)
		\le&
		\frac{1}{h}\,
		\mathbb E\left[\sum_{\mu=2}^M \xi_i^\mu (X_\mu)_{n-1}\right]
		\nonumber\\
		=&
		\frac{M-1}{h}\,\mathbb E[\xi_i^2]\,\mathbb E[(X_2)_{n-1}]
		\nonumber\\
		=&
		\frac{M-1}{h}\,p\, (d)_{n-1}p^{n-1}
		\nonumber\\
		=&
		\frac{M-1}{h}\,(d)_{n-1}p^n.
		\label{eq:amari_large_first_moment}
	\end{align}
	Now
	\[
	Mp^n
	=
	\alpha^{n-1}\exp\bigl(\kappa(\log^2N-\log N\log\log N)\bigr)
	\left(\frac{\log N}{N}\right)^n
	=
	\alpha^{n-1},
	\]
	and
	\[
	(d)_{n-1}\le d^{n-1}=((2-\delta)\log N)^{n-1}.
	\]
	Since
	\[
	h=\gamma^{n-1}\log^{n-1}N,
	\]
	it follows from \eqref{eq:amari_large_first_moment} that
	\[
	\mathbb P\bigl(T_i(\xi^1)\neq \xi_i^1,\ \xi_i^1=0,\ A_\delta\bigr)
	\le
	(1+o(1))
	\left(\frac{(2-\delta)\alpha}{\gamma}\right)^{n-1}.
	\]
	Recalling that $n=\kappa\log N$, we obtain
	\[
	\left(\frac{(2-\delta)\alpha}{\gamma}\right)^{n-1}
	=
	N^{\kappa \log\bigl(\frac{(2-\delta)\alpha}{\gamma}\bigr)+o(1)}.
	\]
	Choose $\alpha>0$ so small that
	\[
	\kappa \log\left(\frac{(2-\delta)\alpha}{\gamma}\right)<-2.
	\]
	Then there exists $c>1$ such that
	\[
	\mathbb P\bigl(T_i(\xi^1)\neq \xi_i^1,\ \xi_i^1=0,\ A_\delta\bigr)
	\le
	N^{-c}
	\]
	for all sufficiently large $N$.
	
	Finally, by a union bound over all coordinates,
	\[
	\mathbb P\bigl(T(\xi^1)\neq \xi^1,\ A_\delta\bigr)
	\le
	N\cdot N^{-c}\longrightarrow0.
	\]
	Since also $\mathbb P(A_\delta^c)\to0$, this proves
	\[
	\mathbb P\bigl(T(\xi^1)=\xi^1\bigr)\longrightarrow1.
	\]
\end{proof}

\begin{proof}[Proof of Corollary \ref{cor:willshaw_large_n}]
	The argument is the same as in the proof of
	Theorem~\ref{theo:amari_large_n}, exactly as in the fixed-order case.
	
	Indeed, for a coordinate $i$ with $\xi_i^1=1$, the stored message $\xi^1$
	itself creates all relevant hyperedges, so that the local field in the Willshaw
	model is bounded below by the corresponding local field in Amari's model.
	Hence every active coordinate remains stable on the event $A_\delta$ used in the
	proof of Theorem~\ref{theo:amari_large_n}.
	
	On the other hand, for a coordinate $i$ with $\xi_i^1=0$, we have
	\[
	\Theta\left(
	\sum_{\mu=2}^M
	\xi_i^\mu\xi_{i_1}^\mu\cdots\xi_{i_{n-1}}^\mu-1
	\right)
	\le
	\sum_{\mu=2}^M
	\xi_i^\mu\xi_{i_1}^\mu\cdots\xi_{i_{n-1}}^\mu
	\]
	for every choice of pairwise distinct indices $i_1,\ldots,i_{n-1}$. Thus the
	false field in the Willshaw model is bounded above by the corresponding false
	field in Amari's model. Consequently, every error event for the Willshaw
	dynamics is contained in the corresponding error event for Amari's dynamics.
	
	The claim therefore follows directly from Theorem~\ref{theo:amari_large_n}.
\end{proof}

Let us finally turn to the GB model with growing $n$.
Here, the proof for fixed interaction order does not extend verbatim to the case where
$n$ grows with the system size, since the random variables
\[
Z_\nu=\mathbf 1\{\xi^\nu_{(a,k)}=1\}\binom{X_\nu}{n-1}
\]
are no longer uniformly controlled by the same exponential-moment estimate.
However, if $n=o(l/c)$, then $X_\nu\sim \mathrm{Bin}(l-1,1/c)$ is sharply concentrated
around $(l-1)/c$, and consequently $\binom{X_\nu}{n-1}$ is, with overwhelming
probability, within a constant factor of
\[
c^{-(n-1)}\binom{l-1}{n-1}.
\]
This reduces the noise term to a binomial counting variable and yields the same
storage scale $M\asymp c^n$. More formally: 

\begin{proof}[Proof of Theorem \ref{thm:GB_logarithmic_n}]
	As before, it suffices to consider a fixed stored message, say $\xi^1$.
	Let $(a,k_a)$ denote the unique active neuron of $\xi^1$ in block $a$.
	
	For a correct neuron $(a,k_a)$, the message $\xi^1$ itself creates all hyperedges
	\[
	((a,k_a),(a_2,k_{a_2}),\dots,(a_n,k_{a_n}))
	\]
	with $a_2,\dots,a_n$ pairwise distinct and different from $a$. Hence
	\[
	S_{(a,k_a)}(\xi^1)\ge \binom{l-1}{n-1}>h,
	\]
	since $\gamma<1$. Therefore
	\[
	T_{(a,k_a)}(\xi^1)=1.
	\]
	
	Now fix a false neuron $(a,k)$, i.e.\ $\xi^1_{(a,k)}=0$. Define
	\[
	X_\nu:=\sum_{b\ne a}\mathbf 1\{\xi^\nu_{(b,k_b)}=1\},
	\qquad
	B_\nu:=\mathbf 1\{\xi^\nu_{(a,k)}=1\},
	\qquad \text{as well as } \quad
	Z_\nu:=B_\nu\binom{X_\nu}{n-1},
	\]
	for $\nu=2,\dots,M$. Then
	\[
	S_{(a,k)}(\xi^1)\le \sum_{\nu=2}^M Z_\nu .
	\]
	Indeed, every active hyperedge contributing to $S_{(a,k)}(\xi^1)$ is generated by
	at least one stored message $\xi^\nu$, $\nu\ge2$, counted by $Z_\nu$; since the
	same hyperedge may be generated by several messages, this gives only an upper bound.
	
	Set
	\[
	\lambda:=\frac{l-1}{c}
	\qquad \text{and }\quad 
	K_n:=\binom{l-1}{n-1}.
	\]
	Since in each block $b\ne a$ the message $\xi^\nu$ hits the prescribed neuron
	$k_b$ with probability $1/c$, independently across blocks, we have
	\[
	X_\nu\sim \mathrm{Bin}(l-1,1/c),
	\]
	hence $\mathbb E[X_\nu]=\lambda$.
	
	Next, let us fix $\eta>0$ and define
	$ \varepsilon_c:=\frac{\eta}{c}$
	and consider the event
	\[
	E_\nu:=\{|X_\nu-\lambda|\le \varepsilon_c\lambda\}.
	\]
	By a standard Hoeffding-Chernoff bound for binomial random variables, 
	since $X_\nu\sim\mathrm{Bin}(l-1,1/c)$ has mean $\lambda=(l-1)/c$ and
	\[
	E_\nu^c=\{|X_\nu-\lambda|>\varepsilon_c\lambda\}
	\qquad\text{with}\qquad
	\varepsilon_c=\eta/c,
	\]
we obtain
	\[
	\mathbb P(E_\nu^c)\le 2\exp(-C\varepsilon_c^2\lambda)
	=2\exp\!\left(-C_\eta\frac{\lambda}{c^2}\right).
	\]
 for some  $C_{\eta}>0$.
	
	 Since $c\sim\log l$ and $\lambda\sim l/\log l$, this yields
	\be \label{eq:estimate_E_c}
	\mathbb P(E_\nu^c)\le 2\exp\!\left(-C'_{\eta}\frac{l}{(\log l)^3}\right).
	\ee
	Consequently,
		\be \label{eq:estimate_E_c2}
	M\,\mathbb P(E_\nu^c)\to0,
	\ee
	because
	\[
	\log M=\log \alpha + n\log c = O(c\log c)=O(\log l\,\log\log l).
	\]
	Hence, with probability tending to one, all events $E_\nu$ occur simultaneously.
	
	On the event $E_\nu$ we have
	\[
	(1-\varepsilon_c)\lambda \le X_\nu\le (1+\varepsilon_c)\lambda.
	\]
	Wherever necessary, we will tacitly assume that $ (1-\varepsilon_c)\lambda, \lambda$ and $ (1+\varepsilon_c)\lambda$ are integers (which is relevant for the binomial coefficients below).
	Since $n=\kappa c$ and $\lambda\sim l/\log l$, we have $n=o(\lambda)$, so for all
	$l$ large enough,
	\[
	\binom{X_\nu}{n-1}
	\le
	\binom{(1+\varepsilon_c)\lambda}{n-1}.
	\]
	Moreover,
	\[
	\binom{(1+\varepsilon_c)\lambda}{n-1}
	\le
	(1+2\varepsilon_c)^{n-1}\binom{\lambda}{n-1},
	\]
	and since
	\[
	(1+2\varepsilon_c)^{n-1}
	=
	\left(1+\frac{2\eta}{c}\right)^{\kappa c+o(c)}
	\le
	e^{2\kappa\eta+o(1)},
	\]
	we obtain
	\[
	\binom{X_\nu}{n-1}
	\le
	e^{2\kappa\eta+o(1)}\binom{\lambda}{n-1}.
	\]
	Finally,
	\[
	\binom{\lambda}{n-1}
	=
	(1+o(1))\,c^{-(n-1)}K_n,
	\]
	so on $E_\nu$,
	\[
	\binom{X_\nu}{n-1}
	\le
	(1+o(1))e^{2\kappa\eta}c^{-(n-1)}K_n.
	\]
	
	Therefore, on the event $\bigcap_{\nu=2}^M E_\nu$,
	\[
	\sum_{\nu=2}^M Z_\nu
	\le
	(1+o(1))e^{2\kappa\eta}c^{-(n-1)}K_n\sum_{\nu=2}^M B_\nu.
	\]
Since the variables $B_\nu$ are i.i.d.\ Bernoulli$(1/c)$, we have
\[
\sum_{\nu=2}^M B_\nu\sim \mathrm{Bin}(M-1,1/c),
\]
with mean
\[
\mu=\frac{M-1}{c}\sim \frac{M}{c}.
\]
Hence, by a standard Chernoff bound, there exists $C_\eta>0$ such that
\[
\mathbb P\left(\sum_{\nu=2}^M B_\nu\ge (1+\eta)\frac{M}{c}\right)
\le
\exp\left(-C_\eta \frac{M}{c}\right)
\]
for all sufficiently large $l$. On this event, and on $\bigcap_{\nu=2}^M E_\nu$, we obtain
\[
\sum_{\nu=2}^M Z_\nu
\le
(1+o(1))e^{2\kappa\eta}c^{-(n-1)}K_n\,(1+\eta)\frac{M}{c}.
\]
Using now that $M=\alpha c^n$, this becomes
\[
\sum_{\nu=2}^M Z_\nu
\le
(1+o(1))(1+\eta)e^{2\kappa\eta}\alpha K_n.
\]
	
	Since $\alpha<\gamma$, we may choose $\eta>0$ so small that
	\[
	(1+\eta)e^{2\kappa\eta}\alpha<\gamma.
	\]
	Hence, for all $l$ large enough,
	\[
	S_{(a,k)}(\xi^1)<\gamma K_n=h
	\]
	with probability tending to one. 	Therefore, for every fixed false neuron $(a,k)$,
	\[
	\mathbb P\bigl(T_{(a,k)}(\xi^1)\neq 0\bigr)\le \varepsilon_l,
	\]
	where
	\[
	\varepsilon_l
	:=
	\mathbb P\Bigl(\bigcup_{\nu=2}^M E_\nu^c\Bigr)
	+
	\mathbb P\left(\sum_{\nu=2}^M B_\nu\ge (1+\eta)\frac{M}{c}\right) \le 	\mathbb P\Bigl(\bigcup_{\nu=2}^M E_\nu^c\Bigr)+\exp\left(-C_\eta \frac{M}{c}\right).
	\]
	By the estimates above, in particular \eqref{eq:estimate_E_c} and \eqref{eq:estimate_E_c2}
	\[
	\varepsilon_l=o((l\log l)^{-1}).
	\]
	Since there are $l(c-1)=O(l\log l)$ false neurons, the union bound yields
	\[
	\mathbb P\bigl(\exists (a,k): \xi^1_{(a,k)}=0,\ T_{(a,k)}(\xi^1)=1\bigr)\to0.
	\]
	Together with the fact that all active neurons remain stable, this implies
	\[
	\mathbb P\bigl(T(\xi^1)\neq \xi^1\bigr)\longrightarrow0.
	\]
	This proves the theorem.

\end{proof}

\begin{rem}
	The logarithmic-order result for the GB model should be compared with the
	corresponding higher-order sparse model with some care. In the GB architecture,
	messages are constrained to have exactly one active neuron per block, and therefore
	a fixed $n$-tuple of neurons from distinct blocks is realized by a given message
	with probability $c^{-n}$. This leads to the natural storage scale $M\asymp c^n$.
	
	In the unrestricted sparse model, on the other hand, the class of admissible patterns
	is much larger, and this permits considerably larger storage capacities. Thus the
	difference in scale is a structural feature of the models rather than an artefact of
	the proof.
\end{rem}

\begin{rem}
	Theorem~\ref{thm:GB_logarithmic_n} is a fixed-point stability result. It shows
	that, with high probability, every stored GB-message is a fixed point of the
	one-step dynamics. In particular, the result does not by itself imply the
	existence of a non-trivial basin of attraction as in \cite{DHLUV17}.
	
	We do not address here the correction of partially corrupted GB-messages.
	Indeed, if $r$ blocks of a stored message are corrupted, then the deterministic
	signal term is reduced from $\binom{l-1}{n-1}$ to approximately
	\[
	\binom{l-1-r}{n-1},
	\]
	while the corresponding noise analysis becomes substantially more involved.
	It would therefore be natural to investigate one-step or multi-step recovery of
	corrupted GB-messages in future work.
\end{rem}

\section*{Acknowledgements} ML was supported by the German Research Foundation under Germany's Excellence Strategy EXC 2044/2 - 390685587, Mathematics M\"unster: Dynamics - Geometry - Structure. 

\newcommand{\etalchar}[1]{$^{#1}$}


\begin{thebibliography}{MPRV87}
	
	\bibitem[ABGJ14]{griponc}
	Behrooz~Kamary Aliabadi, Claude Berrou, Vincent Gripon, and Xiaoran Jiang.
	\newblock Storing sparse messages in networks of neural cliques.
	\newblock {\em IEEE Transactions on Neural Networks and Learning Systems},
	25:980--989, 2014.
	
	\bibitem[Amb24]{ambrogioni2024search}
	Luca Ambrogioni.
	\newblock In search of dispersed memories: Generative diffusion models are
	associative memory networks.
	\newblock {\em Entropy}, 26(5):381, 2024.
	
	\bibitem[Bur94]{burshtein}
	David Burshtein.
	\newblock Nondirect convergence radius and number of iterations of the
	{H}opfield associative memory.
	\newblock {\em IEEE Trans. Inform. Theory}, 40(3):838--847, 1994.
	
	\bibitem[DHL{\etalchar{+}}17]{DHLUV17}
	Mete Demircigil, Judith Heusel, Matthias L\"{o}we, Sven Upgang, and Franck
	Vermet.
	\newblock On a model of associative memory with huge storage capacity.
	\newblock {\em J. Stat. Phys.}, 168(2):288--299, 2017.
	
	\bibitem[GHLV16]{GHLV16}
	Vincent Gripon, Judith Heusel, Matthias L\"{o}we, and Franck Vermet.
	\newblock A comparative study of sparse associative memories.
	\newblock {\em J. Stat. Phys.}, 164(1):105--129, 2016.
	
	\bibitem[Hoe63]{Hoeffding1963}
	Wassily Hoeffding.
	\newblock Probability inequalities for sums of bounded random variables.
	\newblock {\em J. Amer. Statist. Assoc.}, 58:13--30, 1963.
	
	\bibitem[Hop82]{Hopfield1982}
	J.~J. Hopfield.
	\newblock Neural networks and physical systems with emergent collective
	computational abilities.
	\newblock {\em Proc. Nat. Acad. Sci. U.S.A.}, 79(8):2554--2558, 1982.
	
	\bibitem[iA89]{Amari1989}
	Shun ichi Amari.
	\newblock Characteristics of sparsely encoded associative memory.
	\newblock {\em Neural Networks}, 2(6):451 -- 457, 1989.
	
	\bibitem[KH16]{KrotovHopfield2016}
	Dmitry Krotov and John~J. Hopfield.
	\newblock Dense associative memory for pattern recognition.
	\newblock In {\em Proceedings of the 30th International Conference on Neural
		Information Processing Systems}, NIPS'16, pages 1180--1188, Red Hook, NY,
	USA, 2016. Curran Associates Inc.
	
	\bibitem[Lou97]{loukianova}
	Daria Loukianova.
	\newblock Lower bounds on the restitution error in the {H}opfield model.
	\newblock {\em Probab. Theory Related Fields}, 107(2):161--176, 1997.
	
	\bibitem[L{\"o}w98]{Lo98}
	Matthias L{\"o}we.
	\newblock On the storage capacity of {H}opfield models with correlated
	patterns.
	\newblock {\em Ann. Appl. Probab.}, 8(4):1216--1250, 1998.
	
	\bibitem[LV05a]{LV_BEG}
	Matthias L{\"o}we and Franck Vermet.
	\newblock The storage capacity of the {B}lume-{E}mery-{G}riffiths neural
	network.
	\newblock {\em J. Phys. A}, 38(16):3483--3503, 2005.
	
	\bibitem[LV05b]{LV05}
	Matthias L{\"o}we and Franck Vermet.
	\newblock The storage capacity of the {H}opfield model and moderate deviations.
	\newblock {\em Statist. Probab. Lett.}, 75(4):237--248, 2005.
	
	\bibitem[LV11]{LV11}
	Matthias L{\"o}we and Franck Vermet.
	\newblock The {H}opfield model on a sparse {E}rd{\H o}s-{R}enyi graph.
	\newblock {\em J. Stat. Phys.}, 143(1):205--214, 2011.
	
	\bibitem[LV15]{LV14}
	Matthias L{\"o}we and Franck Vermet.
	\newblock Capacity of an associative memory model on random graph
	architectures.
	\newblock {\em Bernoulli}, 21(3):1884--1910, 2015.
	
	\bibitem[MPRV87]{MPRV}
	Robert~J. McEliece, Edward~C. Posner, Eugene~R. Rodemich, and Santosh~S.
	Venkatesh.
	\newblock The capacity of the {H}opfield associative memory.
	\newblock {\em IEEE Trans. Inform. Theory}, 33(4):461--482, 1987.
	
	\bibitem[New88]{Newman_hopfield}
	Charles~M. Newman.
	\newblock Memory capacity in neural network models: Rigorous lower bounds.
	\newblock {\em Neural Networks}, 1(3):223--238, 1988.
	
	\bibitem[Oka96]{Okada1996}
	Masato Okada.
	\newblock Notions of associative memory and sparse coding.
	\newblock {\em Neural Networks}, 9(8):1429 -- 1458, 1996.
	\newblock Four Major Hypotheses in Neuroscience.
	
	\bibitem[PRN{\etalchar{+}}25]{pham2025memorization}
	Bao Pham, Gabriel Raya, Matteo Negri, Mohammed~J Zaki, Luca Ambrogioni, and
	Dmitry Krotov.
	\newblock Memorization to generalization: Emergence of diffusion models from
	associative memory.
	\newblock {\em arXiv preprint arXiv:2505.21777}, 2025.
	
	\bibitem[RSL{\etalchar{+}}20]{ramsauer2020hopfield}
	Hubert Ramsauer, Bernhard Sch{\"a}fl, Johannes Lehner, Philipp Seidl, Michael
	Widrich, Thomas Adler, Lukas Gruber, Markus Holzleitner, Milena Pavlovi{\'c},
	Geir~Kjetil Sandve, et~al.
	\newblock Hopfield networks is all you need.
	\newblock {\em arXiv preprint arXiv:2008.02217}, 2020.
	
	\bibitem[Tal98]{Talagrand_hopfield}
	Michel Talagrand.
	\newblock Rigorous results for the {H}opfield model with many patterns.
	\newblock {\em Probab. Theory Related Fields}, 110(2):177--276, 1998.
	
	\bibitem[WBL69]{Willshaw}
	D.~J. {Willshaw}, O.~P. {Buneman}, and H.~C. {Longuet-Higgins}.
	\newblock {Non-Holographic Associative Memory}.
	\newblock {\em Nature}, 222:960--962, June 1969.
	
\end{thebibliography}
\end{document}